%
\documentclass{article}
\usepackage[utf8]{inputenc}
\usepackage{amsmath, amsfonts, amsthm, amssymb}
\usepackage{graphicx}
\usepackage[all]{xy}

\parindent 0in
\parskip 5pt
\newtheorem{Pro}{Proposition}
\newtheorem{Teo}[Pro]{Theorem}
\newtheorem{Cor}[Pro]{Corollary}
\theoremstyle{definition}
\newtheorem{Def}[Pro]{Definition}
\newtheorem{Exp}[Pro]{Example}
\theoremstyle{remark}
\newtheorem{Obs}[Pro]{Remark}

\begin{document}

\title{MVW-rigs}
\author{Yuri A. Poveda \thanks{Universidad Tecnológica de Pereira.} \\ yapoveda@utp.edu.co \and Alejandro Estrada \thanks{Universidad Tecnológica de Pereira.} \\ alestrada@utp.edu.co}


\maketitle              

\begin{abstract}

In the context of the MV-algebras there is a class of them endowed with a product. A subclass of this class of MV-algebras has been studied by Dinola in \cite{DiNola2}; the MV-algebras product or MVP. Thus for example the MV-algebra $[0,1]$ is closed for the usual product between real numbers. It is known that this product respects the usual order that in turn coincides with the natural order associated with this MV-algebra. Similarly, the algebra of continuous functions of $[0,1]^n$ in $[0,1]$. In the present work the class of MV-algebras with products is characterized in a wider context than that presented by Dinola. From the properties of the universal algebra found in the MV-algebras of closed continuous functions for products it will be shown that this general context is more convenient to work properties analogous to commutative algebra. \\

As a result of this characterization a new algebraic structure is defined, which is an MV-algebra endowed with a product operation, which we will call MVW-rig (Weak-Rig Multivalued)  because of its close relation with the rigs defined in \cite{Menni}. This structure is defined with axioms of universal algebra, a good number of natural examples are presented in the MV-algebras environment and the first results concerning homomorphisms, ideals, quotients and subdirect products are established. In particular, his prime spectrum is studied which, with the co-zariski topology defined by Dubuc, Poveda in \cite{Yuri1} is compact. Consequently, a good number of results analogous to the theory of commutative rings and rigs are presented, with which this theory maintains a close relation. \\

\textbf{Key words:} MVW-rig, MV-algebras, commutative rigs, spectrum, prime ideals, rigs.     \\

\end{abstract}
\section{Introduction}
\begin{Def}

A MV-algebra is a set $(A,\oplus,\neg,0)$ with a closed binary operation $\oplus$ and a unary operation $\neg$ such that for every $x,y,z \in A$ the following equations are satisfied:
\begin{itemize}
\item []MV1) $x\oplus (y\oplus z)=(x\oplus y)\oplus z$
\item []MV2)\, $x\oplus y=y\oplus x$
\item []MV3)\, $ x\oplus 0=x$
\item []MV4)\, $ x\oplus \neg 0=\neg 0 $
\item []MV5)\, $ \neg \neg x=x  $
\item []MV6)\, $\neg (\neg x \oplus y) \oplus y = \neg (\neg y \oplus x) \oplus x$
\end{itemize}
\end{Def}

Given an MV-algebra $A$, we define the constant $u$ and two operations $\odot$ and $\ominus$ as follows:
\begin{flalign}
& u =_{def} \neg 0 &\\ 
& x \odot y =_{def} \neg ( \neg x \oplus \neg y)   & \\
& x \ominus y =_{def} \neg ( \neg x \oplus y) &  
\end{flalign}

It is known that all MV-algebra $A$ with the order $x \leq y$ if and only if $x \ominus y =0$, is a reticle with maximum and minimum.

Next we will recall the definition of homomorphism, ideal and prime ideal in an arbitrary MV-algebra.

\begin{Def}
Given two MV-algebras $A$ and $B$, a function  $f: A \to B$ is a homomorphism of MV-algebras if for all $x,y$ in $A$
\begin{itemize}
\item [i)]  $f(0)=0$
\item [ii)]  $ f(x \oplus y)=f(x) \oplus f(y)$ 
\item [iii)] $ f(\neg x) =\neg f(x)$
\end{itemize}  
\end{Def}

\begin{Def}
Given $ A $ a MV-algebra. A subset $ I $ of $ A $ is called MV-ideal of $ A $ if it satisfies the following properties:
\begin{itemize}
\item [i)] $0 \in I$
\item [ii)]  If $a \leq b$ and $b \in I$, then $a \in I$
\item [iii)]  If $a,b \in I$, then $a \oplus b \in I$
\end{itemize}
\end{Def}

\begin{Def}[MV-prime ideal]

A subset $ P $ of a MV-algebra $ A $ is a MV-prime ideal if it is an ideal of MV-algebra, and given  $a,b \in A$, $a \wedge b \in P$ implies $a \in P$ or $b \in P$
\end{Def}

\begin{Teo}[Chang's Representation Theorem]\label{TeoChang}
Any non-trivial MV-algebra is a subdirect product of MV-chains.
\end{Teo}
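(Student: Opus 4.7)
The plan is to obtain the subdirect representation by the standard two-step argument: first show that prime ideals separate points, i.e.\ the intersection of all prime ideals of $A$ is $\{0\}$; second show that the quotient $A/P$ by a prime ideal is an MV-chain. Combining these, the canonical map
\[
A \longrightarrow \prod_{P \in \mathrm{Spec}(A)} A/P, \qquad a \longmapsto (a/P)_{P},
\]
is an injective homomorphism whose projection to each factor is surjective, which is exactly a subdirect embedding into MV-chains.

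For the separation step, given any $a \neq 0$ in $A$, I would consider the family $\mathcal{F}_a$ of MV-ideals $I$ of $A$ such that $a \notin I$. This family is non-empty ($\{0\} \in \mathcal{F}_a$) and closed under unions of chains (a union of ideals avoiding $a$ still avoids $a$ and remains an ideal because the conditions i), ii), iii) of the ideal definition are of finite character). By Zorn's Lemma there exists an ideal $M$ maximal in $\mathcal{F}_a$. I would then argue that $M$ is in fact a prime ideal: if $b \wedge c \in M$ but $b, c \notin M$, the ideals $\langle M \cup \{b\}\rangle$ and $\langle M \cup \{c\}\rangle$ both properly contain $M$ and hence, by maximality, both contain $a$. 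Using the standard description of the ideal generated by a set in an MV-algebra (elements bounded by finite $\oplus$-sums of multiples $n b \oplus m$ with $m \in M$), together with the identity $n(b \wedge c) = nb \wedge nc$ in the $\ell$-group setting and the distributivity of $\oplus$ over $\wedge$ up to the usual MV-computations, I would deduce $a \in M$, contradicting $M \in \mathcal{F}_a$. Thus $M$ is prime and avoids $a$, proving $\bigcap_{P \in \mathrm{Spec}(A)} P = \{0\}$.

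For the chain step, given a prime ideal $P$, I would show directly from the definition of prime that $A/P$ is totally ordered. Writing $[x]$ for the class of $x$ modulo $P$, the order in the quotient satisfies $[x] \leq [y]$ iff $x \ominus y \in P$. Using the MV-identity $(x \ominus y) \wedge (y \ominus x) = 0$ and the primality of $P$, one of $x \ominus y$ or $y \ominus x$ must lie in $P$, so $[x]$ and $[y]$ are comparable. Hence $A/P$ is a chain.

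The main obstacle is the prime-avoidance argument: going from a maximal member of $\mathcal{F}_a$ to actual primality requires the ideal-generation computations mentioned above, which rely on a reasonable amount of MV-arithmetic (the formula for $\langle M \cup \{b\}\rangle$, the behaviour of $\wedge$ under $n$-fold $\oplus$-sums, and the use of $(x \ominus y) \wedge (y \ominus x) = 0$). Once these MV-algebraic lemmas are in place, both steps of the subdirect representation fall out, and the result follows.
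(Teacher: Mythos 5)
Your proposal is correct and is exactly the classical proof of Chang's subdirect representation theorem (the one in Cignoli--D'Ottaviano--Mundici, Theorem 1.3.3); the paper itself states this result as background and gives no proof, so there is nothing to diverge from. Both halves of your plan are sound: Zorn's Lemma on the ideals avoiding a fixed $a\neq 0$ yields a maximal such ideal, its primality follows from the description of $\langle M\cup\{b\}\rangle$ together with $x\oplus(y\wedge z)=(x\oplus y)\wedge(x\oplus z)$ and the fact that $b'\wedge c'=0$ implies $nb'\wedge nc'=0$, and the quotient by a prime is a chain via $(x\ominus y)\wedge(y\ominus x)=0$. The only point to phrase carefully is the lemma you quote as $n(b\wedge c)=nb\wedge nc$: with the truncated sum this holds in general only as the inequality $nb\wedge nc\leq n(b\wedge c)$, but that is precisely the direction your argument needs.
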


The following result is typical of the MV-algebras, but as it is not found in the literature and we will use it in the present work, we present it together with its proof.

\begin{Pro}\label{dismenos}
	In a MV-algebra $A$ we have that $(x_1 \oplus x_2) \ominus (y_1 \oplus y_2) \leq (x_1 \ominus y_1) \oplus (x_2 \ominus y_2)$.
\end{Pro}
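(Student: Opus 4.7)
The plan is to invoke Chang's Representation Theorem \ref{TeoChang} to reduce the claim to the case where $A$ is an MV-chain. This reduction is justified because the relation $a\leq b$ is equationally definable (by $a\ominus b=0$), hence both preserved and reflected by the subdirect embedding of $A$ into a product of MV-chains; if the inequality holds in each factor, it holds in $A$.

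Working inside an MV-chain, I would invoke the standard realization of linearly ordered MV-algebras as subalgebras of $\Gamma(G,u)$ for a linearly ordered abelian group $G$ with strong unit $u$. Under this identification, $x\oplus y=(x+y)\wedge u$ and $x\ominus y=(x-y)\vee 0$, so the claim translates into the concrete inequality
\[
\bigl((x_1+x_2)\wedge u-(y_1+y_2)\wedge u\bigr)\vee 0 \;\leq\; \bigl((x_1-y_1)\vee 0+(x_2-y_2)\vee 0\bigr)\wedge u
\]
in the ordered group $G$.

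From here I would proceed by a short case analysis on whether $x_1+x_2$ and $y_1+y_2$ exceed $u$. In the two subcases where $x_1+x_2\leq y_1+y_2$ after truncation (namely when $x_1+x_2\leq u<y_1+y_2$, and when both sums exceed $u$), the left-hand side collapses to $0$ and the estimate is trivial. In the subcase $x_1+x_2,\, y_1+y_2\leq u$, the left-hand side is $(x_1+x_2-y_1-y_2)\vee 0$ and the inequality is immediate from the elementary bound $a+b\leq (a\vee 0)+(b\vee 0)$ and from the fact that the left-hand side is clearly $\leq u$. In the remaining mixed case $y_1+y_2\leq u<x_1+x_2$, the left-hand side equals $u-(y_1+y_2)$; one then notes that $x_1+x_2-y_1-y_2>u-(y_1+y_2)$, and since the group-sum $(x_1-y_1)+(x_2-y_2)$ dominates $x_1+x_2-y_1-y_2$, the conclusion follows after truncating by $u$.

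The only genuine obstacle is bookkeeping inside the case split, since one must keep track of when truncations are active; no conceptual ingredient is needed beyond Theorem \ref{TeoChang} and the passage to the $\Gamma(G,u)$ representation of an MV-chain.
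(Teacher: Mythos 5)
Your argument is correct, but it takes a genuinely different route from the paper's. The paper proves the inequality by a direct equational computation valid in any MV-algebra: starting from $\neg(x_1\oplus x_2)\oplus(x_1\oplus x_2)=1$, it enlarges $x_1\oplus x_2$ to $(x_1\vee y_1)\oplus(x_2\vee y_2)$, rewrites each join as $(x_i\ominus y_i)\oplus y_i$ (axiom MV6), absorbs $y_1\oplus y_2$ into the negated term via $\neg a\oplus b=\neg(a\ominus b)$, and concludes from the characterization $a\leq b\iff\neg a\oplus b=1$. You instead go semantic: you reduce to MV-chains by Chang's Representation Theorem (legitimate, since the inequality is the equation $s\ominus t=0$ and equations hold in a subdirect product iff they hold in every factor), realize a chain inside $\Gamma(G,u)$ for a totally ordered abelian group $G$ with strong unit, and check the four truncation cases; your case split is exhaustive and each case closes as you describe (in the two cases where the left side is nonzero you correctly also note that it is bounded by $u$, which is needed before comparing with the truncated right-hand side). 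What the paper's proof buys is brevity and self-containedness: it uses only the MV-algebra axioms already listed in the introduction. What yours buys is mechanical transparency, since in the ordered group the whole statement collapses to the elementary bound $a+b\leq(a\vee 0)+(b\vee 0)$; the price is importing the representation of MV-chains as (subalgebras of) $\Gamma(G,u)$, a result the paper does not state, though it is standard and available in the cited literature.
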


\begin{proof}
Of the definitions of MV-algebra and join we have that:
	$1=  \neg (x_1 \oplus  x_2) \oplus (x_1 \oplus x_2) = \neg (x_1 \oplus x_2) \oplus (x_1 \vee y_1) \oplus (x_2 \vee y_2) = \neg (x_1 \oplus x_2) \oplus (x_1 \ominus y_1) \oplus y_1 \oplus (x_2 \ominus y_2) \oplus y_2 = \neg ((x_1 \oplus x_2) \ominus (y_1 \oplus y_2)) \oplus (x_1 \ominus y_1) \oplus (x_2 \ominus y_2)$ then $(x_1 \oplus x_2) \ominus (y_1 \oplus y_2) \leq (x_1 \ominus y_1) \oplus (x_2 \ominus y_2)$ by the definition of order.\\
\end{proof}
\textbf{Notation:} We represent the addition as: $ \bigoplus_{i=1}^n x_i = x_1 \oplus x_2 \oplus \cdots \oplus x_n$.

If we generalize the last property we have that $$\bigoplus ^{n} _{i=1} x_i \ominus \bigoplus ^{n} _{i_1} y_i \leq \bigoplus ^{n} _{i=1} {(x_i \ominus y_i)}.$$

\section{MVW-rigs}

\begin{Def}[MVW-rig]\label{DefMVWrig}
A \textbf{MVW-rig}  is a structure type $( A, \oplus, \cdot, \neg ,0 )$   with three operations $\neg$, $\oplus$ and $\cdot $ defined in $A$ (by abbreviation $a\cdot b = ab$), such that it fulfills the following axioms for every $a,b,c \in A$:
\begin{itemize}
\item[$i)$] $(A, \oplus, \neg , 0 )$ is a MV-algebra.
\item[$ii)$] $ (A, \cdot)$ is an associative operation defined in $A$.
\item[$iii)$] $ a  0 = 0 a = 0$.
\item[$iv)$]  $a (b \oplus c) \leq a b \oplus a c $ and  $(b \oplus c) a \leq b a \oplus c a $.
\item[$v)$] $a (b \ominus c) \geq a b \ominus a c $ and  $(b \ominus c) a \geq b a \ominus c a $.
\end{itemize}
\end{Def}

By properties of the MV-algebra, $ 0 \leq a$ for all $a \in A$. We define $u =_{def} \neg 0$ for any MVW-rig. Then $ a \leq u$ for all $a \in A$. The operation $\neg$ is called \textbf{negation}, the operation $\oplus$ \textbf{sum} and the operation $\cdot $ \textbf{product} or \textbf{multiplication}.

\textbf{Notation}: $a \cdot a = a^2$. In general $a \cdot a \cdot (n$ $times) = a ^n$. 

\begin{Pro}\label{propiedades}
Given $A$ a MVW-rig, and $a,b,c \in A$. Then
\begin{itemize}
\item[$i)$]  $a \leq b$ implies $ac \leq bc$ and $ca \leq cb$
\item[$ii)$]$a(b \vee c ) \geq ab \vee ac$ and $(b \vee c)a \geq ba \vee ca$,
\item[$iii)$]$a(b \wedge c ) \leq ab \wedge ac$ and $(b \wedge c)a \leq ba \wedge ca$,
\item[$iv)$] $(a \vee b)^n \geq a^n \vee b^n $ with $n \in \mathbb{N}$.
\item[$v)$] $(a \wedge b)^n \leq a^n \wedge b^n $ with $n \in \mathbb{N}$.
\end{itemize}
\end{Pro}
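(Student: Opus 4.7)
The backbone of the whole proposition is part $i)$, which is really the monotonicity of the product on both sides; once that is in hand, parts $ii)$–$v)$ are almost immediate order-theoretic consequences. So my plan is to extract $i)$ from axiom $v)$ of Definition~\ref{DefMVWrig}, and then build everything else on top.

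For $i)$: assume $a\leq b$, which by the definition of the MV-order means $a\ominus b = 0$. Apply axiom $v)$ to $a,b,c$ in the form $ac \ominus bc \leq (a\ominus b)\,c$. Then by axiom $iii)$, $(a\ominus b)\,c = 0\cdot c = 0$, so $ac \ominus bc = 0$, i.e.\ $ac \leq bc$. The symmetric inequality $ca \leq cb$ comes from the right-hand version of axiom $v)$ in the same way.

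For $ii)$ and $iii)$: these are pure lattice arguments on top of $i)$. Since $b \leq b\vee c$ and $c \leq b\vee c$, part $i)$ gives $ab \leq a(b\vee c)$ and $ac \leq a(b\vee c)$, so $ab \vee ac \leq a(b\vee c)$; the right-multiplication version is identical. Dually $b\wedge c \leq b$ and $b\wedge c \leq c$ give $a(b\wedge c)\leq ab$ and $a(b\wedge c)\leq ac$, hence $a(b\wedge c)\leq ab\wedge ac$, and likewise on the right.

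For $iv)$ and $v)$: induct on $n$. The case $n=1$ is trivial. For the inductive step of $iv)$, use $a\leq a\vee b$ and $a^{n}\leq (a\vee b)^{n}$ together with two successive applications of $i)$:
\[
a^{n+1} = a\cdot a^{n} \leq (a\vee b)\cdot a^{n} \leq (a\vee b)(a\vee b)^{n} = (a\vee b)^{n+1},
\]
and symmetrically $b^{n+1}\leq (a\vee b)^{n+1}$, so taking the join gives $a^{n+1}\vee b^{n+1}\leq (a\vee b)^{n+1}$. Part $v)$ is the dual: from $a\wedge b\leq a$ and $a\wedge b\leq b$, induction plus $i)$ yields $(a\wedge b)^{n}\leq a^{n}$ and $(a\wedge b)^{n}\leq b^{n}$, hence $(a\wedge b)^{n}\leq a^{n}\wedge b^{n}$. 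The only place any care is needed is in $i)$, where one must remember that axiom $v)$ is stated as the reverse inequality for $\ominus$ and deduce $ac\ominus bc \leq (a\ominus b)c$ correctly; the rest is formal.
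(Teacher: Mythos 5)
Your proposal is correct and follows essentially the same route as the paper: part $i)$ is obtained from axiom $v)$ of Definition~\ref{DefMVWrig} together with $0\cdot c = c\cdot 0 = 0$ and the characterization $a\leq b \iff a\ominus b = 0$, and parts $ii)$--$v)$ are then the same order-theoretic consequences (the paper merely writes the iterated product for $iv)$ and $v)$ informally where you phrase it as an induction).
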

\begin{proof}
$i)$ Given $a \leq b$ then $a \ominus b =0$, hence $ 0= (a\ominus b)c \geq ac \ominus bc  $ and $ 0= c (a \ominus b) \geq ca \ominus cb $ which implies that $ac \ominus bc = 0$ and $ca \ominus cb = 0$, and thus $ac \leq bc$ and $ca \leq cb$. $ii)$ It follows directly from the definition of join, from the fact that $b \vee c \geq b,c$; and by property $(i)$ $a(b \vee c) \geq ab, ac$. $iii)$ Similary to $ii)$. $iv)$ $(a \vee b)^n = (a \vee b)\cdots (a \vee b) \geq a^n, b^n$. Similarly we obtain $v)$
\end{proof}

If in a MVW-rig $ A $, there is an element $ s $ that has the property that for all $ x $ in $ A $ $ sx = xs = x $ it is said that $ s $ is a \textbf{unitary element} of the MVW-rig and $ A $ will be called \textbf {unitary MVW-rig}, in addition it is unique, since if there is an unitary element $ w \in A $ then $ s = sw = ws = w $. A unitary element in an MVW-rig will be denoted as 1. An MVW-rig is \textbf{commutative} if for all $x,y \in A$, $ xy = yx $.

\begin{Exp}\label{Ejemplousual}
The interval $ [0,1] $ of real numbers with the usual sum of MV-algebra $ [0,1] $ and usual multiplication in $\mathbb{R}$ is a commutative MVW-rig with unitary element where $u=1$ and $\neg x = 1-x$.
\end{Exp}
\begin{Exp}\label{Ejemplosemiusual}
	The interval $ [0, u] $ in $\mathbb{R}$ where $0 \leq u < 1$ is a non-unitary commutative MVW-rig with the truncated sum. If $u=0$ the structure is called \textbf{trivial MVW-rig}. 
\end{Exp}

\begin{Exp}\label{EjemploLukasiewicz}
	The set $\L_n = \left\{0,\frac{1}{n-1},\frac{2}{n-1},\cdots,\frac{n-2}{n-1}, 1\right\} $, known as MV-algebra of \L ukasiewicz is not a MVW-rig because it is not closed for the product. If we close it for products then we get the MVW-rig  $\dot{\L_n} =  \{ \frac{m}{n^k} \in \mathbb{Q} $ for every $k \in \mathbb{N}$ and all integer $m$ between $0$ and $n \} $.
\end{Exp}
\begin{Exp}
	Given $n \in \mathbb{N}$, the set $Z_n = \{ 0, 1, \dots , n \}$ is a MVW-rig with $u = n$ as a strong unit and operations defined as follows: $x \oplus y = $ min$\{n, x + y \} $, $\neg x = n-x$ and $x y = $ min$\{n, x \cdot y \} $ where operations sum $+$ and product $\cdot$ are the usual ones in the natural numbers and the order relation of the MV-algebra is the usual one in the natural ones. This MV-algebra is isomorphic to the MV-algebra $\L_{n+1}$ by applying $ \phi_n : \L_{n+1} \rightarrow Z_n$, $ \phi_n(x) = nx$, but $\L_{n+1} $ is not an MVW-rig. 
	
	This MVW-rig has some interesting properties: It has unitary element and is different from $ u $ if $ n> 1 $; has no cancellation property, the product between two elements is greater than or equal to them. This MVW-rig is a good source of counterexamples of properties that may be true for other MVW-rigs.
	
\end{Exp}
\begin{Exp}\label{EjemploFree}
	Given the MV-algebra $Free_1$, through the Mundici functor we get the $l_u$-group $Free_1^*$ which is isomorphic to the set of continuous functions of $[0,1]$ in $\mathbb{R}$ that have the property that each of them is constituted by finite linear polynomials with integer coefficients and that is contained in the $l_u$-ring $\mathcal{C}(\mathbb{R}^{[0,1]})$. Thus we can take the $l_u$-ring generated by $Free_1^*$ in $\mathcal{C}(\mathbb{R}^{[0,1]})$ which we will call $\dot{F}[x]$. This $l_u$-ring is isomorphic to $l_u$-ring of functions in $\mathcal{C}(\mathbb{R}^{[0,1]})$  each consisting of finite polynomials of $\mathbb{Z}[x]$. $\dot{F}[x]$ is a $l_u$-commutative ring. Given $u$ strong unit of $\dot{F}[x]$ such that $u^2 \leq u$ we take $\Gamma( \dot{F}[x],u) = \{f \in \dot{F}[x] \mid 0 \leq f \leq u \} $. This MV-algebra with the usual product of functions is a commutative MVW-rig denoted by $F_u[x]$. The MVW-rig $F_u[x]$ has more elements than the MV-algebra $ Free_1 $ since in $F_u[x]$ there are piecewise polynomial functions whose polynomials are of degree greater than 1 and the intervals of definition have by ends algebraic numbers. However $Free_1 \subset F_u[x]$. See \cite{Steven}.
	
	Unlike the previous example, this MVW-rig satisfies $fg \leq f \wedge g$ for all $f,g  \in F_u[x]$.
	
\end{Exp}
\begin{Exp}\label{Matrices}
	\textit{MVW-rig of matrices:} Given $ M_n $ the set of square matrices $n \times n$ with entries in $[0,1/n]$. We define the sum of two matrices $A,B \in M_n$ with the componentwise operation in the MV-algebra $[0,1/n]$, that is to say $A \oplus B = C$ with  $ c_{ij} =  a_{ij} \oplus b_{ij} $ for every $i,j = 1, \dots , n.$ The negation is defined componentwise in the MV-algebra $[0,1/n]$, that is  $ \neg A = C$, where $c_{ij} = \neg a_{ij} $ for all $i,j = 1, \dots , n.$. The set $ M_n $ is a MV-algebra with the described operations and the zero matrix. The natural order in $ M_n $ is given by: $A \leq B \Leftrightarrow_{def} (A)_{ij} \leq (B)_{ij} $ for every $i,j = 1, \dots , n.$ This defines a partial order in $M_n$. We now define the product in $M_n$ as $(AB)_{ij} = \bigoplus_{k=1}^n a_{ik} b_{kj} $ where each product and sum is defined in the MVW-rig $[0,1/n]$. Note that the strong unit in this MVW-rig is the matrix

	\[ U= \begin{pmatrix}
	1/n & \dots & 1/n\\
	\vdots & \ddots & \vdots \\
	1/n & \dots & 1/n
	\end{pmatrix} \]
	
	Let’s prove each of the MVW-rig axioms: $i)$ We know that  $M_n$ is a MV-algebra. $ii)$ Given three matrices $A,B,C \in M_n$ we have $(A(BC))_{ij}$=$ ( A ( \bigoplus_{k=1}^n b_{ik} c_{kj} ))_{ij} = \bigoplus_{r=1}^n a_{ir} (\bigoplus_{k=1}^n b_{ik} c_{kj}) =  \bigoplus_{r=1}^n \bigoplus_{k=1}^n a_{ir} b_{ik} c_{kj} =   \bigoplus_{k=1}^n \bigoplus_{r=1}^n a_{ir} b_{ik} c_{kj} = \\  \bigoplus_{r=1}^n ( \bigoplus_{k=1}^n a_{ir} b_{ik})  c_{kj} =  \bigoplus_{r=1}^n (AB)_{ik} c_{kj} = ((AB)C)_{ij}$ where the equality in the distributive law is true when the sum doesn’t overpass $u$ and in this case its proved. $iii)$ It follows directly from the definitions. $iv)$ $(A(B \oplus C))_{ij} = \bigoplus_{k=1}^n a_{ik}(b_{kj} \oplus c_{kj}) \leq   \bigoplus_{k=1}^n (a_{ik}b_{kj} \oplus a_{ik}c_{kj}) =   \bigoplus_{k=1}^n a_{ik}b_{kj} \oplus \bigoplus_{k=1}^n a_{ik}c_{kj} = (AB)_{ij} \oplus (AC)_{ij}$. $v)$. Given $(A(B \ominus C))_{ij} = \bigoplus_{k=1}^n a_{ik}(b_{kj} \ominus c_{kj}) \geq   \bigoplus_{k=1}^n (a_{ik}b_{kj} \ominus a_{ik}c_{kj})$ and by proposition (\ref{dismenos}) $\bigoplus_{k=1}^n (a_{ik}b_{kj} \ominus a_{ik}c_{kj})  \geq \bigoplus_{k=1}^n a_{ik}b_{kj} \ominus \bigoplus_{k=1}^n a_{ik}c_{kj}\\ = (AB)_{ij} \ominus (AC)_{ij}$.
\end{Exp}

\begin{Obs}
	A $l_u$-ring $R$ is a lattice ordered ring such that $(R, +, u)$ is a lattice ordered group with strong unit.
\end{Obs}

\begin{Pro}\label{Ejemplogamma}
	Given $l_u$-ring $R$ for which $u^2\leq u$, we have $\Gamma(R,u)$ is a MVW-rig, where $$\Gamma(R,u)=\{ x \mid 0 \leq x \leq u \}$$ with the truncated sum and the product of $ R $.
\end{Pro}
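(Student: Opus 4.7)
The plan is to verify each clause of Definition~\ref{DefMVWrig} for the set $\Gamma(R,u)$ equipped with the MV-sum $\oplus$ and with the multiplication inherited from $R$. Clause $(i)$ is the content of Mundici's classical categorical equivalence: looking only at the additive structure $(R,+,u)$, the interval $[0,u]$ forms an MV-algebra with truncated sum $b\oplus c=(b+c)\wedge u$ and negation $\neg b=u-b$. What really requires work is (a) showing that the product stays inside $[0,u]$, and (b) transferring the distributive inequalities $(iv)$ and $(v)$ from the ring operations of $R$ to the truncated operations of $\Gamma(R,u)$.

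For closure, I would first recall that in any $l_u$-ring the product of two non-negative elements is non-negative, so $0\le a,b\le u$ forces $ab\ge 0$. Moreover, multiplication by a non-negative element is order-preserving: if $x\le y$ then $y-x\ge 0$, hence $a(y-x)\ge 0$, giving $ax\le ay$. Applying this twice, $ab\le au\le u\cdot u=u^{2}\le u$, using the hypothesis $u^{2}\le u$ at the crucial last step. Thus $ab\in\Gamma(R,u)$. Clauses $(ii)$ and $(iii)$ of Definition~\ref{DefMVWrig} are then immediate, since associativity of $\cdot$ and the identity $a0=0a=0$ are inherited verbatim from the ring $R$.

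For clauses $(iv)$ and $(v)$, the key ingredient is the explicit description of the MV-operations: in $\Gamma(R,u)$ one has $b\oplus c=(b+c)\wedge u$ and $b\ominus c=(b-c)\vee 0$, where $+,-$ are the ring operations of $R$. For $(iv)$, since $(b+c)\wedge u\le b+c$ and since left multiplication by $a\ge 0$ is order-preserving, $a(b\oplus c)\le a(b+c)=ab+ac$; also $(b+c)\wedge u\le u$ gives $a(b\oplus c)\le au\le u$, so $a(b\oplus c)\le (ab+ac)\wedge u=ab\oplus ac$. For $(v)$, from $(b-c)\vee 0\ge b-c$ one gets $a(b\ominus c)\ge a(b-c)=ab-ac$, and from $(b-c)\vee 0\ge 0$ combined with $a\ge 0$ one gets $a(b\ominus c)\ge 0$; together these yield $a(b\ominus c)\ge(ab-ac)\vee 0=ab\ominus ac$. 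The right-hand inequalities in $(iv)$ and $(v)$ are proved identically, using the order-preservation of right multiplication by non-negatives.

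There is no deep obstacle here; the only real subtlety is bookkeeping, namely consistently distinguishing the MV-operations $\oplus,\ominus$ of $\Gamma(R,u)$ from the ring operations $+,-$ of $R$, and invoking the hypothesis $u^{2}\le u$ at exactly the point where closure of the product under truncation is at stake. Everything else is a transparent combination of order-preservation of positive multiplication in an $l_u$-ring with the explicit formulas defining $\oplus$ and $\ominus$ on $[0,u]$.
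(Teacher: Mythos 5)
Your proof is correct and follows essentially the same route as the paper: invoke the $\Gamma$ construction for the MV-algebra structure, use $u^{2}\le u$ together with order-preservation of multiplication by nonnegative elements for closure of the product, and transfer axioms $(iv)$ and $(v)$ via the explicit formulas $b\oplus c=(b+c)\wedge u$ and $b\ominus c=(b-c)\vee 0$. If anything, your handling of $(v)$ is slightly more careful than the paper's, which asserts the equality $x((y-z)\vee 0)=x(y-z)\vee x0$ (an identity that in a general $l$-ring holds only as the inequality $\ge$), whereas you derive exactly the needed inequality from the two lower bounds $a(b\ominus c)\ge ab-ac$ and $a(b\ominus c)\ge 0$.
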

\begin{proof}
$i)$ $\Gamma(R,u)$ is a MV-algebra \cite{Yuri3}. $ii)$ As $u^2 \leq u$ then $xy \leq u$ for every $x,y \in \Gamma(R,u)$, then $(x \cdot y ) \cdot z = (xy) \cdot z = (xy)z = x(yz) = x\cdot (yz) = x \cdot (y \cdot z) $ where $\cdot$ is the product in $\Gamma(R,u)$. $iii)$ it follows directly of the definition. $iv)$ $x \cdot (y \oplus z) = x(y \oplus z) = x( (y +z) \wedge u ) \leq x(y+z) \wedge xu \leq x(y+z) \wedge u = (xy + xz) \wedge u =xy \oplus xz = x \cdot y \oplus x \cdot z$. The distributive on the left is similar. $v)$ $x(y \ominus z) = x((y -z) \vee 0) = x(y - z) \vee x 0 = x(y-z) \vee 0 =(xy - xz) \vee 0 = xy \ominus xz$. $v)$ $x \cdot 0 =  u \wedge x0 =  u \wedge 0  = 0 =  u \wedge 0  =   u \wedge 0x  = 0 \cdot x$.
\end{proof}

\section{Homomorphisms and ideals in the MVW-rigs}
\begin{Def}
Given $A$ and $B$ MVW-rigs. A function $f: A \rightarrow B$ is a homomorphism of MVW-rigs if the following properties are true:
\begin{itemize}
\item[$i)$]  $f$ is a homomorphism of MV-algebras.
\item[$ii)$] $f(ab)=f(a)f(b)$.
\end{itemize}
\end{Def}
\begin{Exp}
	Given the MVW-rig $F_u[x]$ and a function $\hat{a}$ such that it evaluates each function of $F_u[x]$ in a point $a\in [0,1]$. That is, $\hat{a}: F_u[x] \to [0,1]$, $\hat{a}(f)=f(a)$. The funtion $\hat{a}$ is a homomorphism. Let's see: $\hat{a}(0)=0(a)=0$, where $0$ if the zero funtion of $F_u[x]$. We also have that $\hat{a}(\neg f) = (\neg f)(a)= \neg f(a)=\neg \hat{a}(f)$ for every $f \in F_u[x]$.  Let's take $f,g \in F_u[x]$, then $\hat{a}(f \oplus g)= (f\oplus g )(a)= f(a) \oplus g(a) = \hat{a}(f) \oplus \hat{a}(g)$ and this proves $i)$. Given $f,g \in F_u[x]$ we have that $\hat{a}(f g)= (f g )(a)= f(a) g(a) = \hat{a}(f) \hat{a}(g)$ and this proves $ii)$. 
	The evaluation homomorphism is of great importance for relating sets $F_u[x]$ with $A=[0,u]$.
\end{Exp}

\begin{Def}
An ideal of an MVW-rig $A$ is a subset $I$ of $A$ that fulfills the following properties:
\begin{itemize}
\item[$i)$] $I$ is an MV-ideal of $A$ as MV-álgebra.
\item[$ii)$] If $a \in I$ and $b \in A$ then $ab \in I$ and $ba \in I$.
\end{itemize}
\end{Def}
\begin{Exp}
	In $F_u[x]$ we can take ideals analogous to the MV-ideals of MV-algebra $Free_1$ which are ideal in the MVW-rig $F_u[x]$. Let $z$ be a fixed element of $[0,1]$.
	\begin{itemize}
		\item $ f \in I_z \iff  f(z)=0 $
		\item $ f \in I_{z^+} \iff \exists \epsilon > 0 \text{ such that 
		}  f(x)=0, \forall x \in [z, z + \epsilon ] $.  ($z \neq 1$)
		\item $  f \in I_{z^-} \iff \exists \epsilon > 0 \text{ such that  
		}  f(x)=0, \forall x \in [z - \epsilon , z ]$. ($z \neq 0$)
		\item $ f \in I_S \iff f(x)=0 \forall x \in S, S \subset [0,1] $.
	\end{itemize}
\end{Exp}

\begin{Pro}
If $S$ is a subset of a commutative MVW-rig $ A $, then the ideal generated by $ S $ in $ A $ is:
\[ \langle S \rangle = \{ x \in A \mid x \leq \bigoplus_{i=1}^n a_i s_i, s_i \in S, a_i \in A \text{ or } a_i \in \mathbb{N} \text{ for each } i=1, \dots , n \} \]
\textbf{Note:} If $a_i = n \in \mathbb{N}$, $ns= s \oplus \cdots \oplus s$ ($n$ times).
\end{Pro}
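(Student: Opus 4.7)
The plan is to prove the two inclusions separately. Let $T$ denote the candidate set on the right-hand side. Since $\langle S \rangle$ is by definition the smallest ideal of the MVW-rig containing $S$, it suffices to check that $S \subseteq T$, that $T$ is itself an ideal, and that every ideal containing $S$ must contain $T$.

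First I would dispatch the easy structural properties of $T$. Taking $n=1$, $a_1 = 1 \in \mathbb{N}$ gives $s \leq 1 \cdot s = s$ for every $s \in S$, so $S \subseteq T$; taking $a_1 = 0$ gives $0 \in T$; and downward closure is immediate from the transitivity of $\leq$. For closure under $\oplus$, given $x_1 \leq \bigoplus_{i=1}^{n_1} a_i s_i$ and $x_2 \leq \bigoplus_{j=1}^{n_2} b_j t_j$ with all coefficients in $A \cup \mathbb{N}$ and all generators in $S$, monotonicity of $\oplus$ (standard in any MV-algebra) gives $x_1 \oplus x_2 \leq \bigoplus_{i=1}^{n_1} a_i s_i \oplus \bigoplus_{j=1}^{n_2} b_j t_j$, and the right-hand side is again a finite sum of the required shape.

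The main step is closure under multiplication by an arbitrary $b \in A$: given $x \in T$ with $x \leq \bigoplus_{i=1}^n a_i s_i$, Proposition~\ref{propiedades}(i) yields $bx \leq b\bigl(\bigoplus_{i=1}^n a_i s_i\bigr)$, and applying axiom (iv) of Definition~\ref{DefMVWrig} inductively along the $\oplus$ gives
\[
b\Bigl(\bigoplus_{i=1}^n a_i s_i\Bigr) \;\leq\; \bigoplus_{i=1}^n b(a_i s_i).
\]
It then remains to bound each $b(a_i s_i)$ by a term of the required shape. If $a_i \in A$, associativity and commutativity of $\cdot$ give $b(a_i s_i) = (b a_i) s_i$ with $b a_i \in A$. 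If instead $a_i \in \mathbb{N}$, writing $a_i s_i = s_i \oplus \cdots \oplus s_i$ and applying axiom (iv) once more produces $b(a_i s_i) \leq b s_i \oplus \cdots \oplus b s_i$, a sum of $a_i$ terms each of the form (element of $A$) times $s_i \in S$. In either branch, $bx$ is bounded by a finite sum of the required form, so $bx \in T$; the case $xb$ is identical by commutativity.

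For minimality, any ideal $J$ containing $S$ is closed under $\oplus$, under multiplication by elements of $A$, and is downward closed; an easy induction on $n$ then shows every finite combination $\bigoplus_{i=1}^n a_i s_i$ with $s_i \in S$ and $a_i \in A \cup \mathbb{N}$ lies in $J$, whence $T \subseteq J$ by downward closure. The main obstacle is the dual meaning of the coefficients $a_i$: when $a_i \in A$ the product is absorbed by associativity, but when $a_i \in \mathbb{N}$ one must unfold $a_i s_i$ into an iterated $\oplus$ and reapply the sub-distributive axiom (iv), which is precisely the reason the definition of $T$ admits both sorts of coefficients.
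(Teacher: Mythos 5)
Your proof is correct and follows essentially the same route as the paper: verify that the candidate set is an ideal containing $S$ (downward closure, closure under $\oplus$, absorption via Proposition~\ref{propiedades}(i) and axiom (iv)), then show any ideal containing $S$ contains it. The only difference is that you treat the case $a_i \in \mathbb{N}$ explicitly in the absorption step by unfolding $a_i s_i$ into an iterated sum, a detail the paper's proof glosses over by writing $zx \leq \bigoplus_{i=1}^n (z a_i) s_i$ uniformly.
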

\begin{proof}
	$ \langle S \rangle $ is ideal because $0 \in \langle S \rangle $. Given $x , y \in \langle S \rangle $ then $x \leq \bigoplus_{i=1}^n a_i s_i$ and $y \leq \bigoplus_{j=1}^m a_j s_j$, then $ x \oplus y \leq \bigoplus_{i=1}^n a_i s_i \oplus  \bigoplus_{j=1}^m a_j s_j$ and so $x \oplus y \in \langle S \rangle $. Given $x \leq y \in \langle S \rangle$ then $x \leq y \leq \bigoplus_{i=1}^n a_i s_i$ and therefore $x \in \langle S \rangle $. Lastly, given $x \in \langle S \rangle$ and $z \in A $ then $ zx \leq \bigoplus_{i=1}^n (z a_i) s_i $ this implies that $zx \in \langle S \rangle $ and the same for the case $xz \in \langle S \rangle$. \\
	To see that $\langle S \rangle$ is the less ideal of $A$ that contains $S$, let's take an ideal $I$ from $A$ that contains $S$ and let's see that $\langle S \rangle \subset I$. Given $x \in \langle S \rangle$ then $x \leq \bigoplus_{i=1}^n a_i s_i$. Since $I$ contains $S$ then every $s_i$ belongs to  $I$ and therefore $a_i s_i $ belongs to $I$. Then the sum $\bigoplus_{i=1}^n a_i s_i$ belongs to $I$ and this implies that $x \in I$.
\end{proof}

\begin{Pro}\label{prohomo}
Given $\phi$ a homomorphism of a MVW-rigs $A$ to a MVW-rig $B$, we have the following properties:
\begin{itemize}
\item[$i)$] If $S$ is a subMVW-rig of $A$ then $\phi (S) $ is a subMVW-rig of $B$.
\item[$ii)$] $\phi(x) \leq \phi(y)$ if and only if $ x \ominus y \in $ Ker$(\phi)$.
\item[$iii)$]If  $J$ is ideal of $B$ then $\phi^{-1}(J)$ is ideal of $A$.
\item[$iv)$]If $A$ is unitary and $\phi(1) \neq 0$  then $\phi(1)$ is the unitary element of $\phi(A)$.
\item[$v)$]$\phi$ is inyective if and only if Ker$(\phi) = \{ 0 \} $.
\end{itemize}
\end{Pro}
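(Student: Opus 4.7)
The plan is to treat each of the five items by direct verification, relying on the fact that $\phi$ preserves $\oplus$, $\neg$, and $0$ (hence also $\ominus$, $\wedge$, $\vee$, and the order $\leq$) and additionally preserves the product $\cdot$. The proofs are formally similar to the ring-theoretic analogues but use the MV-algebra order and the definition of $\ominus$ in place of ordinary subtraction.

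For $(i)$, I would take arbitrary elements $\phi(s_1),\phi(s_2)\in\phi(S)$ and check that $\phi(S)$ is closed under $\oplus$, $\neg$, and $\cdot$ and contains $0$, by pushing each operation through $\phi$. For $(ii)$, since $\phi$ is an MV-homomorphism it also preserves $\ominus$, so $\phi(x\ominus y)=\phi(x)\ominus\phi(y)$; then $\phi(x)\leq\phi(y)$ iff $\phi(x)\ominus\phi(y)=0$ iff $\phi(x\ominus y)=0$ iff $x\ominus y\in\mathrm{Ker}(\phi)$. The order-preservation of $\phi$ that this entails is the single fact that makes the remaining items cheap.

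For $(iii)$, I would verify the three MV-ideal axioms together with the MVW-rig absorption axiom for $\phi^{-1}(J)$: the element $0$ lies in $\phi^{-1}(J)$ because $\phi(0)=0\in J$; downward closure uses that $a\leq b$ implies $\phi(a)\leq\phi(b)\in J$; closure under $\oplus$ uses $\phi(a\oplus b)=\phi(a)\oplus\phi(b)\in J$; and if $a\in\phi^{-1}(J)$ and $b\in A$, then $\phi(ab)=\phi(a)\phi(b)\in J$ because $J$ absorbs products in $B$. For $(iv)$, I would simply compute $\phi(1)\phi(a)=\phi(1\cdot a)=\phi(a)=\phi(a\cdot 1)=\phi(a)\phi(1)$ for every $a\in A$ and note that these $\phi(a)$ range over all of $\phi(A)$; the hypothesis $\phi(1)\neq 0$ is used only to rule out the degenerate case in which $\phi(A)$ is trivial and hence has $0$ as unit.

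The subtlest item is $(v)$, but it reduces cleanly to $(ii)$. One direction is immediate from $\phi(0)=0$. For the converse, if $\phi(x)=\phi(y)$, then $\phi(x)\leq\phi(y)$ and $\phi(y)\leq\phi(x)$, so by $(ii)$ both $x\ominus y$ and $y\ominus x$ lie in $\mathrm{Ker}(\phi)=\{0\}$; hence $x\leq y$ and $y\leq x$, giving $x=y$. The only real pitfall throughout is remembering that order in an MV-algebra is encoded by $\ominus$ rather than being a primitive relation, which is exactly why $(ii)$ has to be proved first and then used to handle $(v)$.
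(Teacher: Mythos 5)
Your proposal is correct and follows exactly the route the paper intends: the paper's own "proof" is a one-line assertion that everything follows directly from the definition of MVW-rig homomorphism, and your item-by-item verification (in particular reducing $(v)$ to $(ii)$ via the encoding of the order by $\ominus$) simply supplies the routine details the paper omits.
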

\begin{proof}
The definition of homomorphism of MVW-rig is directly followed.
\end{proof}

We will use this property of MV-algebras in the following proposition: $x \leq y \oplus z \iff x \ominus z \leq y$.

\begin{Pro}
	Given $I$ an ideal in a MVW-rig $A$ the equivalence relation. $x \equiv _I y \Leftrightarrow_{def} (x \ominus y) \oplus (y \ominus x) \in I$ is a congruence in the category of MVW-rigs.
	
\end{Pro}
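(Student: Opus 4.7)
The plan is to verify that $\equiv_I$ is first an equivalence relation on $A$ and then compatible with $\neg$, $\oplus$ and $\cdot$. Reflexivity, symmetry, transitivity, and compatibility with $\neg$ and $\oplus$ follow from the standard MV-algebra theory, because $I$ is in particular an MV-ideal and $\equiv_I$ is the usual congruence associated to it; I would cite this rather than re-derive it. The only genuinely new content is compatibility with the product.

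So suppose $x\equiv_I x'$ and $y\equiv_I y'$; the goal is $(xy\ominus x'y')\oplus(x'y'\ominus xy)\in I$. First I note that $x\ominus x'\leq (x\ominus x')\oplus(x'\ominus x)\in I$, so by downward closure $x\ominus x'\in I$; the same argument puts $x'\ominus x$, $y\ominus y'$ and $y'\ominus y$ in $I$. Next I insert an intermediate term via the MV-algebra triangle inequality $a\ominus c\leq (a\ominus b)\oplus(b\ominus c)$, which follows by chaining $a\leq(a\ominus b)\oplus b\leq(a\ominus b)\oplus(b\ominus c)\oplus c$ and then invoking the equivalence $x\leq y\oplus z \iff x\ominus z\leq y$ recalled just before the proposition. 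Applying this with $a=xy$, $b=x'y$, $c=x'y'$, and then axiom $(v)$ of the MVW-rig, which yields $(x\ominus x')y\geq xy\ominus x'y$ and $x'(y\ominus y')\geq x'y\ominus x'y'$, I obtain
\[
xy\ominus x'y' \;\leq\; (x\ominus x')\,y \;\oplus\; x'\,(y\ominus y').
\]
Since $x\ominus x'\in I$ and $y\ominus y'\in I$, the definition of ideal (closure under left and right multiplication by arbitrary elements of $A$) puts both summands on the right in $I$, so their sum lies in $I$, and downward closure gives $xy\ominus x'y'\in I$. A symmetric argument inserting $xy'$ (or reversing the roles) gives $x'y'\ominus xy\in I$, and the sum of these two elements of $I$ is again in $I$, finishing the product step.

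The main obstacle is purely bookkeeping: in an MVW-rig the distributivity over $\oplus$ and $\ominus$ is not an equality but two one-sided inequalities pointing in opposite directions, so one must be careful to use axiom $(v)$ (not $(iv)$) precisely in the direction that upper-bounds $xy\ominus x'y$ by something known to be in $I$; using the wrong inequality would only give a lower bound and would not close the argument. Once the correct direction is chosen, the proof reduces to the triangle inequality plus ideal closure.
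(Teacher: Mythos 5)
Your proof is correct, but it takes a genuinely different route to the product step than the paper does. You insert the intermediate term $x'y$ and use the triangle inequality $xy\ominus x'y'\leq(xy\ominus x'y)\oplus(x'y\ominus x'y')$ together with axiom $(v)$ to bound each piece by $(x\ominus x')y$ and $x'(y\ominus y')$ respectively, both of which lie in $I$ by the absorption property. The paper instead starts from $ac\leq(a\vee b)(c\vee d)=((a\ominus b)\oplus b)((c\ominus d)\oplus d)$ and expands with the subdistributivity axiom $(iv)$, obtaining $ac\ominus bd\leq(a\ominus b)(c\ominus d)\oplus(a\ominus b)d\oplus b(c\ominus d)$; so, contrary to your closing remark, axiom $(iv)$ does suffice to close the argument --- it is just applied to a product of joins rather than to a difference of products. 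Your version is slightly leaner (one fewer residual term, and only one application of a one-sided distributivity per summand), while the paper's version has the mild advantage of not needing the triangle inequality for $\ominus$ as a separate lemma, deriving everything from the join decomposition $a\vee b=(a\ominus b)\oplus b$. Both arguments use only that $I$ is a two-sided ideal, so neither needs commutativity. Everything else in your write-up (deferring the $\oplus$ and $\neg$ compatibility to standard MV-algebra theory, extracting $x\ominus x'\in I$ by downward closure) matches the paper.
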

\begin{proof}
It is known that this relation respects addition and negation. It is enough to show that it respects the product. If $a \equiv b \ mod(I)$ and $c \equiv d \ mod(I)$ then $a \ominus b \in I$ and $c \ominus d \in I$. We have that $ac \leq (a \vee b)(c \vee d) = ((a \ominus b) \oplus b)((c \ominus d) \oplus d)$ by definition of join; using distributive law of the axiom \ref {DefMVWrig}(iv) we have $ac \leq ((a \ominus b) \oplus b)((c \ominus d) \oplus d) \leq (a \ominus b) ((c \ominus d) \oplus d) \oplus b((c \ominus d) \oplus d) \leq (a \ominus b)(c \ominus d) \oplus (a \ominus b)d \oplus b(c \ominus d) \oplus bd$. So, $ac \ominus bd \leq (a \ominus b)(c \ominus d) \oplus (a \ominus b)d \oplus b(c \ominus d) \in I$ by absorbing property of $I$ regarding the product. Similarly $bd \ominus ac \in I$, then $(ac \ominus bd) \oplus (bd \ominus ac) \in I$ which implies $ac \equiv bd \ mod(I)$.
\end{proof}
\begin{Pro}
There is a bijection between the ideals of an MVW-rig $ A $ and the congruences in $A$.
\end{Pro}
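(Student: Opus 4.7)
The plan is to exhibit an explicit inverse to the construction $I \mapsto {\equiv_I}$ given in the previous proposition and verify that the two assignments compose to the identity in both directions. For a congruence $\theta$ on the MVW-rig $A$, I would define $I_\theta := \{x \in A : x \,\theta\, 0\}$, the congruence class of $0$.

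First I would check that $I_\theta$ is an MVW-rig ideal. Since $\theta$ respects all the operations, this is routine: $0 \in I_\theta$ trivially; if $a,b \in I_\theta$ then $a \oplus b \,\theta\, 0 \oplus 0 = 0$; if $a \leq b$ and $b \in I_\theta$ then $a = a \wedge b \,\theta\, a \wedge 0 = 0$, so $a \in I_\theta$; and if $a \in I_\theta$, $b \in A$, then by axiom (iii) of Definition \ref{DefMVWrig}, $ab \,\theta\, 0 \cdot b = 0$, and symmetrically $ba \in I_\theta$. The verification $I_{\equiv_I} = I$ is then immediate: $x \in I_{\equiv_I}$ iff $x \equiv_I 0$ iff $(x \ominus 0) \oplus (0 \ominus x) = x \oplus 0 = x$ belongs to $I$.

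The remaining task is to show $\equiv_{I_\theta} \,=\, \theta$, i.e.\ that for every $x,y \in A$, $x \,\theta\, y$ iff $(x \ominus y) \oplus (y \ominus x) \in I_\theta$. The forward direction uses that $\theta$ respects $\ominus$ and $\oplus$: from $x \,\theta\, y$ one deduces $x \ominus y \,\theta\, y \ominus y = 0$ and $y \ominus x \,\theta\, 0$, hence their sum is $\theta$-related to $0$ and lies in $I_\theta$. For the converse, suppose $(x \ominus y) \oplus (y \ominus x) \in I_\theta$. Because $a \leq a \oplus b$ in any MV-algebra and $I_\theta$ is downward closed (as an MV-ideal), both $x \ominus y$ and $y \ominus x$ lie in $I_\theta$. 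Using the standard MV-algebra identity $(x \ominus y) \oplus y = x \vee y$, congruence with $x \ominus y \,\theta\, 0$ yields $x \vee y \,\theta\, 0 \oplus y = y$, and symmetrically $x \vee y \,\theta\, x$; transitivity then gives $x \,\theta\, y$.

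The main obstacle is essentially psychological rather than technical: one might worry that the multiplicative axiom of MVW-rig ideals could generate an extra constraint not captured by the MV-algebra bijection. However, because every MVW-rig congruence respects multiplication and $0 \cdot b = b \cdot 0 = 0$, the absorption property of $I_\theta$ falls out for free, so no new identity beyond those used in the MV-algebra case is needed.
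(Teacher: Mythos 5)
Your proof is correct and follows essentially the same route as the paper: the inverse assignment is the congruence class of $0$, absorption falls out from $a\cdot 0=0\cdot a=0$ together with the fact that the congruence respects the product, and the two compositions are checked to be identities via $x\ominus 0\oplus 0\ominus x=x$ and $(x\ominus y)\oplus y=x\vee y$. You simply spell out details the paper leaves implicit; no substantive difference.
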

\begin{proof}
Given  $ \equiv $ a congruence in $A$, the set $ I= \{ x \in A \mid x \equiv 0 \} $ is an ideal of MVW-rig. Just see that it has the absorbing property. If $x \equiv 0$ and $z \in A$ then since $\equiv $ preserves the product we have that $ xz \equiv 0z=0$. On the other hand, it was seen that $ \equiv_I $ is a congruence. The above assignment is bijective. Given $I,J$ ideals of $A$, $x \equiv_I y \iff x \equiv_J y \iff I=J.$ On the other hand, given $\equiv $ a congruence in $A$, $a \equiv b \iff a\ominus b \oplus b \ominus a \equiv 0 \iff a \equiv_I b$ with $I= \{ x \in A \mid x \equiv 0 \}$.    
\end{proof}

\section{Quotient MVW-rig}
We define the quotient $ A / I $ as the set of equivalence classes of $ x $ for each $x \in A$ which are denoted by $ [x] _I $. The set $ A / I $ has the operations:
\begin{flalign}
&  \neg [x]_I = [\neg x ]_I \label{negCoc} &\\  
& [x]_I \oplus [y]_I = [x \oplus y]_I  \label{sumCoc} & \\ 
& [x]_I [y]_I  = [x y]_I \label{ProdCoc} &  
\end{flalign}
The following proposition follows that $ \equiv_I $ is a congruence.
\begin{Pro}
$A/I$ is a MVW-rig. 
\end{Pro}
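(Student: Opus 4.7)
The plan is to leverage the fact, just established in the previous proposition, that $\equiv_I$ is a congruence in the category of MVW-rigs. This immediately guarantees that the three operations in (\ref{negCoc})--(\ref{ProdCoc}) are well defined on equivalence classes, and from there every MVW-rig axiom for $A/I$ will be inherited from the corresponding axiom in $A$ by a representative-wise computation.

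First I would dispatch axiom $(i)$ of Definition \ref{DefMVWrig}: that $(A/I,\oplus,\neg,[0]_I)$ is an MV-algebra. Each of MV1--MV6 transports straight across, e.g.\ $[x]_I\oplus([y]_I\oplus[z]_I)=[x\oplus(y\oplus z)]_I=[(x\oplus y)\oplus z]_I=([x]_I\oplus[y]_I)\oplus[z]_I$, and analogously for commutativity, the two identities involving $0$ and $\neg 0$, double negation and the \L ukasiewicz axiom. Axiom $(ii)$ (associativity of $\cdot$) and axiom $(iii)$ ($[a]_I[0]_I=[0]_I[a]_I=[0]_I$) are exactly the same calculations applied to the product.

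For axioms $(iv)$ and $(v)$ the one non-formal point is compatibility of the natural orders. I would record as a short intermediate observation that the order on $A/I$, namely $[x]_I\le[y]_I$ iff $[x]_I\ominus[y]_I=[0]_I$, satisfies: whenever $x\le y$ in $A$ then $x\ominus y=0\in I$, hence $[x]_I\le[y]_I$ in $A/I$. Granting this, axiom $(iv)$ in $A/I$ reads
\[
[a]_I([b]_I\oplus[c]_I)=[a(b\oplus c)]_I\le[ab\oplus ac]_I=[a]_I[b]_I\oplus[a]_I[c]_I,
\]
which holds because $a(b\oplus c)\le ab\oplus ac$ in $A$; the right-distributive inequality and the two inequalities of axiom $(v)$ are obtained by the same one-line transport.

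The main (and only) obstacle is the order-compatibility step above: if it were false, the inequality axioms $(iv)$--$(v)$ would not descend to the quotient even though the equational axioms do. Fortunately it is immediate from the definitions of $\ominus$, of the order on an MV-algebra, and of $\equiv_I$, so the proof reduces to the routine representative-wise verifications sketched above.
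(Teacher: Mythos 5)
Your proposal is correct and follows the same route as the paper, which simply notes that the proposition is a consequence of $\equiv_I$ being a congruence and leaves the verification implicit. Your additional observation that $x\le y$ in $A$ forces $[x]_I\le[y]_I$ in $A/I$ (since $x\ominus y=0\in I$) is exactly the point needed to transport the inequality axioms $(iv)$ and $(v)$, so the write-up is a sound elaboration of the paper's one-line justification.
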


The correspondence $x \mapsto [x]_I$ defines a surjective homomorphism $ h $ from MVW-rig $ A $ to quotient MVW-rig $ A / I $ called the natural homomorphism of $ A $ onto $ A / I $ with $ \ker (h) = I$.
\begin{Teo}
Given $\phi$ a homomorphism of a MVW-rig $A$ to a MVW-rig $B$ with $ \ker (\phi)= K$, then there is a canonical isomorphism between $\phi(A)$ and $A/K$.
\[ \xymatrix{A \ar[rrrr]^{\phi} \ar[rrd]_{h} & & & & \phi(A) \subset B    \\  & & A/K  \ar@{^(->}[rru]_{\varphi}^{\cong} } \]
\end{Teo}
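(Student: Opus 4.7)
The plan is to exhibit the canonical map $\varphi: A/K \to \phi(A)$ defined by $\varphi([x]_K) = \phi(x)$, and then verify in turn that it is well-defined, a homomorphism of MVW-rigs, bijective, and makes the triangle commute. This is the standard first isomorphism pattern, and thanks to the results already assembled (the congruence associated to an ideal, the quotient construction, and Proposition \ref{prohomo}) each step should be short.

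First I would handle well-definedness, which is the only substantive step. Suppose $[x]_K = [y]_K$, so by definition $(x \ominus y) \oplus (y \ominus x) \in K$. Since $K$ is an MV-ideal, it is downward closed (Definition of MV-ideal, part ii)), and since both $x \ominus y$ and $y \ominus x$ are below $(x \ominus y) \oplus (y \ominus x)$, both summands lie in $K$. Applying Proposition \ref{prohomo}(ii) to each gives $\phi(x) \leq \phi(y)$ and $\phi(y) \leq \phi(x)$, hence $\phi(x) = \phi(y)$. So $\varphi$ is well-defined on classes.

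Next I would check that $\varphi$ is a homomorphism of MVW-rigs: using the quotient operations (\ref{negCoc}), (\ref{sumCoc}), (\ref{ProdCoc}), each axiom reduces immediately to the corresponding axiom for $\phi$, e.g. $\varphi([x]_K \oplus [y]_K) = \varphi([x \oplus y]_K) = \phi(x \oplus y) = \phi(x) \oplus \phi(y) = \varphi([x]_K) \oplus \varphi([y]_K)$, and similarly for $\neg$ and the product. Surjectivity onto $\phi(A)$ is immediate from the definition, since every element of $\phi(A)$ has the form $\phi(x) = \varphi([x]_K)$ for some $x \in A$.

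For injectivity, I would invoke Proposition \ref{prohomo}(v): it suffices to check that $\ker(\varphi) = \{[0]_K\}$. If $\varphi([x]_K) = 0$, then $\phi(x) = 0$, so $x \in K$, and therefore $[x]_K = [0]_K$. Finally, commutativity of the diagram $\varphi \circ h = \phi$ follows from $\varphi(h(x)) = \varphi([x]_K) = \phi(x)$ for every $x \in A$. The only place requiring genuine thought is the well-definedness step, where one must use that an MV-ideal is downward closed in order to split the symmetric-difference condition into two one-sided conditions; everything else is a routine transfer along the quotient map.
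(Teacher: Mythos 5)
Your proof is correct and follows essentially the same route as the paper: both define the canonical map $\varphi([a]_K)=\phi(a)$ and verify it is a well-defined bijective homomorphism of MVW-rigs making the triangle commute. The only difference is one of self-containment: the paper delegates well-definedness, bijectivity and the MV-algebra part of the homomorphism property to Theorem 1.2.8 of Cignoli--D'Ottaviano--Mundici and only checks the product explicitly, whereas you derive those steps directly from Proposition \ref{prohomo} and the downward closure of the ideal $K$; both arguments are sound.
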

\begin{proof}
By the proposition \ref{prohomo}($i$) $\phi(A)$ is a MVW-rig. We define $\varphi : A/K \rightarrow \phi(A)$ by $\varphi([a]_K) = \phi(a)$. The Theorem 1.2.8 of \cite{Cignoli} shows that $\varphi$ is well defined, is one-to-one and onto in $\phi(A)$ with $\varphi([a]_K \oplus [b ]_K) = \varphi([a]_K) \oplus \varphi([b]_K)$. However, $\varphi([a]_K[b ]_K) = \varphi([ab]_K) = \phi(ab) = \phi(a) \phi(b)= \varphi([a]_K) \varphi([b]_K)$. So, $\varphi$ is an isomorphism of MVW-rigs.
\end{proof}

\begin{Teo}\label{Corresp_ideales}
Given $ A $ a MVW-rig, $ I $ an ideal of $ A $, then there exists a bijective correspondence between the ideals of $ A $ containing $ I $ and the ideals of the quotient MVW-rig $ A / I $ which preserves the relation of inclusion and also the direct and inverse image of an ideal is an ideal.
\end{Teo}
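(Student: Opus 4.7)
The plan is to use the natural surjective homomorphism $h\colon A\to A/I$ given by $x\mapsto [x]_I$ and to exhibit the correspondence as
\[
J\ \longmapsto\ h(J), \qquad K\ \longmapsto\ h^{-1}(K),
\]
proving these are mutually inverse bijections between $\{J\text{ ideal of }A : I\subseteq J\}$ and $\{K\text{ ideal of }A/I\}$. Both inclusion-preservation statements are then immediate from the monotonicity of direct and inverse image under a function.

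First I would handle the easy direction. Given $K$ an ideal of $A/I$, Proposition \ref{prohomo}(iii) already tells us $h^{-1}(K)$ is an ideal of $A$; since $0\in K$, we have $I=\ker(h)=h^{-1}(\{0\})\subseteq h^{-1}(K)$, so $h^{-1}(K)$ lies in the correct set. The identity $h(h^{-1}(K))=K$ is then immediate from the surjectivity of $h$.

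Next I would verify that $h(J)$ is an ideal of $A/I$ whenever $J$ is an ideal of $A$ with $I\subseteq J$. Closure under $\oplus$ and the absorption property for the product follow directly from $h(a\oplus b)=h(a)\oplus h(b)$ and $h(ab)=h(a)h(b)$ together with the corresponding properties of $J$. The one step that requires a little care is downward closure: if $[x]_I\leq [a]_I$ with $a\in J$, then $[x]_I\wedge[a]_I=[x\wedge a]_I=[x]_I$ (since $h$ preserves $\ominus$ and hence $\wedge$), and $x\wedge a\leq a\in J$ gives $x\wedge a\in J$, so $[x]_I\in h(J)$.

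The crux is the identity $h^{-1}(h(J))=J$, and this is exactly where the hypothesis $I\subseteq J$ is used. The inclusion $J\subseteq h^{-1}(h(J))$ is trivial. For the reverse, suppose $x\in h^{-1}(h(J))$, so $[x]_I=[a]_I$ for some $a\in J$; then $(x\ominus a)\oplus(a\ominus x)\in I\subseteq J$, which forces $x\ominus a\in J$. Using the MV-algebra fact recorded in the excerpt, $x\leq (x\ominus a)\oplus a$, and since the right-hand side lies in $J$ (closure of $J$ under $\oplus$), the downward-closure of $J$ yields $x\in J$. Once this is in hand, the two assignments are mutually inverse bijections, and the inclusion-preserving and image-of-ideal-is-ideal clauses have all been established along the way.
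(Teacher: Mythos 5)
Your proposal is correct and follows essentially the same route as the paper: the correspondence induced by the natural surjection $h\colon A\to A/I$, with the key identity $h^{-1}(h(J))=J$ established exactly as in the paper's injectivity argument (via $x\ominus a\in I\subseteq J$ and $x\le (x\ominus a)\oplus a=x\vee a$) and $h(h^{-1}(K))=K$ from surjectivity. The only cosmetic difference is your downward-closure step for $h(J)$, where you replace the representative by $x\wedge a$ instead of pulling the inequality back through $\ker(h)$ as the paper does.
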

\begin{proof}
Let $f$ be the natural homomorphism of $A$ over $A/I$. Given $J$ ideal of $A$ that contains $I$ then Ker$(f) = I \subset J$, we want to see that $f(J)$ is ideal in $A/I$: $0 \in f(J)$ because Ker$(f) \subset J$, given $x,y \in f(J)$ and $z \in A/I$, then $x = f(a)$, $y = f(b)$, $z = f(k)$ with $a,b \in J$, $k \in A$; from here results $x \oplus y = f(a \oplus b)$, $zx = f(ka)$, $xz = f(ak)$, with which $x \oplus y,$ $xz$, $kx \in f(J)$, and if $z \leq x$ and $x \in f(J)$ then $k \ominus a \in Ker(f)$ by property (ii) of the proposition (\ref{prohomo}), then $k \ominus a \in J$ and since $a \in J$ then $(k \ominus a ) \oplus a = k \vee a \in J$ and this implies that $k \in J$, this is, $f(k)= z \in f(J)$. On the other hand, if $\tilde{J}$ is ideal of $A/I$, again by proposition (\ref{prohomo}) $f^{-1}(\tilde{J})$ is ideal of $A$ and contains $I$ because for $a \in I$, $[a]_I = [0]_I \in  \tilde{J}$. \\
Given $\mathcal{I}$ the collection of all the ideals of $A$ that contain $I$ and $\mathcal{I}_0$ the collection of all the ideals of $A/I$. \\
The correspondence $\widetilde{f} : \mathcal{I} \rightarrow \mathcal{I}_0$, $\widetilde{f}(J) := f(J) = \{ [a]_I \mid a \in J \} $ is a bijection. \\
$\widetilde{f}$ is injective since given $K,J \in \mathcal{I}$ with $\widetilde{f}(J) = \widetilde{f}(K)$, then $a \in J \Leftrightarrow [a]_I \in \widetilde{f}(J) \Leftrightarrow [a]_I \in \widetilde{f}(K) \Leftrightarrow a \in K$. (The implication $[a]_I \in \widetilde{f}(K) \Rightarrow a \in K$ is because if we have $b \in K$ given that $[a]_I = [b]_I$ then $a \ominus b  \in K$, then $(a \ominus b) \oplus b \in K$ and therefore $a \vee b \in K$ and so $a \in K$). \\
$\widetilde{f}$ is surjective because given $\tilde{J} \in \mathcal{I}_0$ we know that $f^{-1}(\tilde{J})$ is ideal of $A$ that contain $I$; and since $f$ is surjective $f(f^{-1}(\tilde{J}))= \tilde{J}$, that is to say, $\widetilde{f}(f^{-1}(\tilde{J}))= \tilde{J}$. \\
$\widetilde{f}$ preserves the inclusion because given $J \supseteq K$ in $\mathcal{I}$, if $[a]_I \in \widetilde{f}(K)$ then $a \in K$ and $a \in J$, then $[a]_I \in J$ which implies that $\widetilde{f}(J) \supseteq \widetilde{f}(K)$.
\end{proof}

\subsection{Prime and maximal ideals}

\begin{Def}
An ideal $I$ of a MVW-rig $A$ is prime if $ab \in I$ implies $a \in I$ or $b \in I$.
\end{Def}

\begin{Pro}\label{homoprimo}
Given $A,B$ MVW-rigs. If $f: A \rightarrow B$ is a homomorphism of MVW-rigs and $P$ is a prime ideal of $B$, then $f^{-1} (P) = \{ a \in A \mid f(a) \in P \}$ is a prime ideal of $A$.
\end{Pro}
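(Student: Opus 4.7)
The plan is to verify the two defining conditions of a prime ideal separately, leaning on results already established in the paper.

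First I would note that $f^{-1}(P)$ is already known to be an ideal of $A$: this is exactly item $(iii)$ of Proposition \ref{prohomo}, applied to the ideal $P$ of $B$. So the only thing left is to verify the primality condition on $f^{-1}(P)$, namely that $ab \in f^{-1}(P)$ implies $a \in f^{-1}(P)$ or $b \in f^{-1}(P)$.

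For this, I would take an arbitrary pair $a,b \in A$ with $ab \in f^{-1}(P)$. By definition of $f^{-1}(P)$, this means $f(ab) \in P$. Since $f$ is a homomorphism of MVW-rigs, the multiplicativity axiom gives $f(ab)=f(a)f(b)$, so $f(a)f(b) \in P$. Because $P$ is prime in $B$, we conclude $f(a)\in P$ or $f(b)\in P$, i.e.\ $a\in f^{-1}(P)$ or $b\in f^{-1}(P)$, as required.

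There is essentially no obstacle here; the argument is a routine transport of primality through the homomorphism, combined with the ideal statement already proved in Proposition \ref{prohomo}. The only subtlety worth a brief mention is that $f^{-1}(P)$ is nonempty and proper in a meaningful sense only because $0\in f^{-1}(P)$ (which follows from $f(0)=0\in P$), and this is already built into the notion of MV-ideal used in the earlier proposition.
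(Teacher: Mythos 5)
Your proof is correct and matches the paper's own treatment: the paper leaves Proposition~\ref{homoprimo} without an explicit proof, but the identical argument (preimage is an ideal by the homomorphism properties, and $f(ab)=f(a)f(b)\in P$ forces $f(a)\in P$ or $f(b)\in P$) appears verbatim in the proof of Proposition~\ref{funcionSpec}$(i)$. Your appeal to Proposition~\ref{prohomo}$(iii)$ for the ideal part is exactly the intended shortcut.
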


In general, being an ideal MV-prime for $ A $ does not imply that it is a prime ideal for MVW-rig $ A $, it is not true either  otherwise . The implication is obtained only when we can establish a relation of order between the product and the infimum, as in the following proposition:

\begin{Pro}
Given a MVW-rig $A$ where $ab \leq a \wedge b$ for every $a,b \in A$. If $P$ is a prime ideal of $A$ then it is a MV-prime ideal.
\end{Pro}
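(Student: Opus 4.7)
The statement to prove is quite direct given the hypothesis, so my plan is short. The goal is to take an element of the form $a \wedge b$ inside $P$ and conclude that one of $a,b$ lies in $P$; the only tools available are the assumed pointwise inequality $ab \leq a \wedge b$, the fact that $P$ is an MVW-rig ideal (hence in particular an MV-ideal, so downward closed with respect to the order), and the hypothesis that $P$ is prime in the MVW-rig sense, that is $xy \in P$ implies $x \in P$ or $y \in P$.

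With that in mind, I would proceed as follows. Suppose $a \wedge b \in P$. By the structural hypothesis on $A$, we have $ab \leq a \wedge b$. Since $P$ is an MV-ideal of the underlying MV-algebra, it is downward closed with respect to $\leq$, and therefore $ab \in P$. Now apply the assumption that $P$ is a prime ideal of the MVW-rig: $ab \in P$ forces $a \in P$ or $b \in P$. This is exactly the condition required for $P$ to be an MV-prime ideal, so the conclusion follows.

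The proof uses no nontrivial intermediate lemma; there is no real obstacle. The only conceptual point worth highlighting in the write-up is that the hypothesis $ab \leq a \wedge b$ is precisely what is needed to transfer primality across from the multiplicative notion (defined in terms of the product $\cdot$) to the order-theoretic notion (defined in terms of $\wedge$), and that without such an inequality the two notions are genuinely independent, as already noted in the remark preceding the proposition.
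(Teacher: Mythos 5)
Your proof is correct and follows exactly the paper's argument: from $a\wedge b\in P$ and $ab\le a\wedge b$ conclude $ab\in P$ by downward closure of the MV-ideal, then invoke primality of $P$ with respect to the product. Nothing to add.
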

\begin{proof}
Given $a,b \in A$ such that $a \wedge b \in P$, then $ab \in P$ by the relation $ab \leq a \wedge b$. Since $P$ is prime ideal of the MVW-rig $A$ then $a \in P $ or $b \in P$, and so $P$ is a MV-prime ideal.
\end{proof}

\begin{Def}
Given $M$ an ideal of a MVW-rig $A$. $M$ is a \textbf{maximal ideal} if for every $a \in A$ with $a \notin M$, $ \langle M, a  \rangle = A$.
\end{Def}

\begin{Pro}
In a commutative MVW-rig $A$ with unitary element, every maximal ideal is a prime ideal.
\end{Pro}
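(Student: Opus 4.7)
The plan is to argue by contrapositive: assume $ab \in M$ but $a \notin M$, and deduce that $b \in M$ by using maximality of $M$ together with the ideal-generation description from the earlier proposition.

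First I would use maximality. Since $a \notin M$, we have $\langle M \cup \{a\} \rangle = A$, so in particular $1 \in \langle M \cup \{a\} \rangle$. By the formula for the generated ideal in a commutative MVW-rig, this yields
\[ 1 \leq m_0 \oplus \bigoplus_{i=1}^{n} c_i a, \]
where $m_0$ is an element of $M$ (obtained by collecting all summands whose generator lies in $M$, which stay in $M$ by the absorbing property and closure under $\oplus$), and each $c_i$ is either an element of $A$ or a natural number.

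Next I would multiply through by $b$. Using that $1$ is the unitary element and Proposition~\ref{propiedades}(i), I get $b = 1\cdot b \leq \bigl(m_0 \oplus \bigoplus_{i=1}^{n} c_i a\bigr) b$. Then the weak right-distributivity of axiom (iv), iterated in the form $\bigoplus_j x_j \cdot z \leq \bigoplus_j x_j z$, gives
\[ b \;\leq\; m_0 b \,\oplus\, \bigoplus_{i=1}^{n} (c_i a) b. \]
By associativity of the product, $(c_i a) b = c_i (ab)$ when $c_i \in A$, and when $c_i = k \in \mathbb{N}$ we have $(c_i a) b = (a\oplus\cdots\oplus a)b \leq ab \oplus \cdots \oplus ab = k(ab)$, again using axiom (iv). In either case each term $(c_i a) b$ lies in $M$ because $ab \in M$ and $M$ absorbs products (and is closed under $\oplus$). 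Likewise $m_0 b \in M$. Thus $b$ is bounded above by an element of $M$, and downward closure of $M$ gives $b \in M$, as required.

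The step that requires the most care is the passage from the inequality $1 \leq m_0 \oplus \bigoplus c_i a$ to a usable inequality for $b$, because axiom (iv) only provides one-sided distributivity ($\leq$ rather than $=$). Fortunately, the direction we get is precisely the one compatible with ideal membership, and commutativity is used exactly where the earlier proposition on generated ideals is invoked, so no further commutativity argument is needed.
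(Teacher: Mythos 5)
Your proof is correct and follows essentially the same route as the paper's: use maximality to bound $1$ by $m\oplus x$ with $m\in M$ and $x$ in the ideal generated by $a$, multiply by $b$, apply the subdistributivity axiom (iv), and conclude by the absorbing and downward-closure properties of $M$. The only difference is that you unpack the generated-ideal formula explicitly (including the natural-number-coefficient case), which the paper leaves implicit.
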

\begin{proof}
Given $M$ a maximal ideal and $a,b$ elements of $A$ such that $ab \in M$. Let's suppose that $a \notin M$, then there exists elements $m \in M$ and $x \in \langle a \rangle$ such that $1 \leq m \oplus x$. Then, $b = b1 \leq b(m \oplus x) \leq bm \oplus bx$, but $bm \in M$ and $bx \in \langle ab \rangle \subset M $ and so $b \in M$.
\end{proof}

An element $x$ of a MVW-rig $A$ is called \textbf{nilpotent} if $x^n=0$ for any $n>0$. The set $N$ of all nilpotent elements of $A$ is called the \textbf{nilradical of $A$}.

\begin{Pro}
The nilradical $N$ of a commutative MVW-rig  $A$ is an ideal of $A$ and $A/N$ doesn't have nilpotent elements different from zero.
\end{Pro}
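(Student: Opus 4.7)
The plan is to verify, in order, the four ideal axioms of Definition (the MVW-rig ideal definition), and then handle the statement about the quotient at the end. Condition $0\in N$ is immediate because $0^{1}=0$. For the downward-closure condition, suppose $a\le b$ and $b^{n}=0$; iterating Proposition \ref{propiedades}($i$) (multiplication by a fixed element is monotone on both sides) gives $a^{n}\le b^{n}=0$, so $a\in N$. The absorption condition is also easy once commutativity is used: if $a^{n}=0$ and $b\in A$, then $(ab)^{n}=a^{n}b^{n}=0\cdot b^{n}=0$ by axiom $(iii)$ of Definition \ref{DefMVWrig}, so $ab\in N$.

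The main obstacle is closure under $\oplus$, because in an MVW-rig the product distributes over $\oplus$ only with an inequality (axiom $(iv)$ of Definition \ref{DefMVWrig}). The idea is to mimic the binomial expansion, but only as an upper bound. By induction on $k$, using repeatedly axiom $(iv)$ and the monotonicity of the product (Proposition \ref{propiedades}($i$)), I would establish
\[
(a\oplus b)^{k}\ \le\ \bigoplus_{i+j=k}\binom{k}{i}\, a^{i}b^{j},
\]
where $\binom{k}{i}\cdot x$ denotes the $\binom{k}{i}$-fold $\oplus$-sum of $x$ with itself. Given $a^{n}=0$ and $b^{m}=0$, take $k=n+m-1$: for every pair $(i,j)$ with $i+j=k$ one has $i\ge n$ or $j\ge m$, so $a^{i}b^{j}=0$ in either case (using $a^{i}=a^{i-n}a^{n}=0$ when $i\ge n$, and similarly for $b$). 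Thus the whole right-hand side is $0$, forcing $(a\oplus b)^{n+m-1}=0$ and $a\oplus b\in N$.

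For the final assertion about $A/N$, suppose $[x]_{N}$ is nilpotent in the quotient MVW-rig, that is, $[x]_{N}^{k}=[0]_{N}$ for some $k\ge 1$. By the definition of the product in $A/N$ (equation \ref{ProdCoc}) this means $[x^{k}]_{N}=[0]_{N}$, i.e.\ $x^{k}\in N$. Hence $(x^{k})^{m}=x^{km}=0$ for some $m$, so $x\in N$ and $[x]_{N}=[0]_{N}$. The only step that requires care is the identification $[x]_{N}^{k}=[x^{k}]_{N}$; it follows inductively from \ref{ProdCoc}, which in turn relies on the previously established fact that $\equiv_{N}$ is a congruence for the product. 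No further difficulty arises.
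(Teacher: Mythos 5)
Your proof is correct and follows essentially the same route as the paper: the binomial-type bound $(a\oplus b)^{n+m-1}\le\bigoplus_{i+j=n+m-1}\binom{n+m-1}{i}a^{i}b^{j}$ with $i\ge n$ or $j\ge m$ forcing every term to vanish, monotonicity of powers for downward closure, commutativity for absorption, and the same quotient argument via equation (\ref{ProdCoc}). If anything, you are more careful than the paper, which writes that $(x\oplus y)^{m+n-1}$ \emph{is} a sum of products $x^{r}y^{s}$, whereas only the inequality of axiom $(iv)$ is available — your explicit statement of the bound as an inequality is the right way to phrase it.
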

\begin{proof}
$0 \in N$. If $x,y \in N$ then $x^n=0$ and $y^m=0$ for any $n,m \in \mathbb{N}$, then $(x \oplus y)^{m+n-1}$ is a sum of products $x^r y^s$ where $r+s= m+n-1$ and $r>n$ or $s>m$ (since $A$ is commutative), after every product is zero and therefore $(x \oplus y)^{m+n-1}=0$, so $x \oplus y \in N$. If $x \leq y \in N$ then exists $n \in \mathbb{N}$ such that $y ^n =0$, since $x \leq y$ then $x^n \leq y^n = 0$ and so $x^n =0$ therefore $x \in N$. Given $x \in N$ and $y \in A$ then there exists $n \in \mathbb{N}$ such that $x^n = 0$, since $A$ is commutative we have that $x^n y^n = (xy)^n = 0$ and therefore $xy \in N$. This shows that $N$ is an ideal. \\
To see that $A/N$ doesn't have nilpotent elements, let's take a nilpotent element $[x]_N$ in $A/N$, then there exists an integer $m>0$ such that $[x]_N^m = [0]_N$ in $A/N$, then $[x^m]_N = [0]_N$ by equation (\ref{ProdCoc}), this implies that $x^m \in N$ and therefore exists an integer $k>0$ such that $(x^m)^k =0$, then $x \in N$ and so $[x]_N = [0]_N$. 
\end{proof}

\begin{Pro}\label{NilradyPrimo}
The nilradical $N$ of a MVW-rig $A$ is contained in each prime ideal of $A$. 
\end{Pro}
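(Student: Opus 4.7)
The plan is to fix an arbitrary prime ideal $P$ of $A$ and an arbitrary nilpotent $x \in N$, and show $x \in P$ by a straightforward induction on the nilpotency index.

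First I would observe that since $x \in N$, there exists $n \geq 1$ with $x^n = 0$. Because $P$ is an ideal, $0 \in P$, so $x^n \in P$. The entire content of the proof is then the following auxiliary claim, proved by induction on $n \geq 1$: \emph{if $x^n \in P$ for some $n \geq 1$, then $x \in P$}.

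For the base case $n=1$, $x^n = x \in P$ directly. For the inductive step $n > 1$, I would write $x^n = x \cdot x^{n-1}$ (this uses only the associativity of the product, axiom $(ii)$ of Definition \ref{DefMVWrig}, and requires no commutativity since $x$ evidently commutes with itself). Since $x \cdot x^{n-1} \in P$ and $P$ is prime, either $x \in P$ — in which case we are done — or $x^{n-1} \in P$, in which case the induction hypothesis applied to the exponent $n-1$ gives $x \in P$.

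There is no real obstacle here: the only subtlety worth noting is that the argument makes no use of commutativity of the MVW-rig nor of the existence of a unit, so the statement holds in the full generality of Definition \ref{DefMVWrig}. Applying the claim to our particular $n$ with $x^n = 0 \in P$ yields $x \in P$, and since $x \in N$ and $P$ were arbitrary, $N \subseteq P$ for every prime ideal $P$ of $A$.
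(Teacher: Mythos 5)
Your proposal is correct and follows essentially the same route as the paper: from $x^n = 0 \in P$ and primeness of $P$ one extracts $x \in P$. The paper compresses the induction on the exponent into the single phrase ``since $P$ is prime, $x \in P$''; you merely make that step explicit, and your observation that neither commutativity nor a unit is needed is accurate.
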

\begin{proof}
Given $x \in N$, exists an integer $n>0 $ such that $x^n = 0$, since $0\in P$ for every prime ideal $P$ of $A$ then $x ^n \in P$ and since $P$ is prime, $x \in P$.
\end{proof}

\begin{Pro}
Every non-trivial MVW-rig $A$ has a maximal ideal.
\end{Pro}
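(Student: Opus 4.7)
The plan is a standard Zorn's Lemma argument, mirroring the classical commutative-algebra proof that every nonzero ring has a maximal ideal. Let $\Sigma$ denote the collection of all proper ideals of $A$, where by \emph{proper} I mean $I \neq A$, equivalently (since ideals are downward closed and $a \leq u$ for every $a \in A$) $u \notin I$. Order $\Sigma$ by inclusion.

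First I would check $\Sigma$ is nonempty: since $A$ is non-trivial, $u \neq 0$, hence $u \notin \{0\}$, so the zero ideal $\{0\}$ lies in $\Sigma$. Next I would verify that every nonempty chain $\mathcal{C} \subseteq \Sigma$ has an upper bound in $\Sigma$, taking the union $U = \bigcup_{I \in \mathcal{C}} I$. This is the routine step: one checks that $U$ satisfies the MVW-rig ideal axioms: $0 \in U$; if $a \leq b \in U$ then $b \in I$ for some $I \in \mathcal{C}$ and downward closure in $I$ gives $a \in I \subseteq U$; if $x, y \in U$ then both lie in a common member of $\mathcal{C}$ (by the chain property) whose closure under $\oplus$ puts $x \oplus y$ in $U$; and if $a \in U$, $b \in A$, then $a \in I$ for some $I$ and the absorbing property in $I$ yields $ab, ba \in I \subseteq U$. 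Finally $U$ is proper because $u$ is absent from every $I \in \mathcal{C}$, hence from $U$.

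By Zorn's Lemma $\Sigma$ has a maximal element $M$. It remains to verify that $M$ satisfies the paper's definition of maximal ideal: for every $a \in A$ with $a \notin M$, the ideal $\langle M, a \rangle$ properly contains $M$ (it contains $a$ but $M$ does not), so by maximality of $M$ in $\Sigma$ it cannot be proper, i.e.\ $\langle M, a \rangle = A$.

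I do not expect any real obstacle here; the only point requiring a bit of care is the characterization ``proper $\iff$ $u \notin I$,'' which guarantees that a chain of proper ideals has a proper union (since $u$ cannot sneak in at the limit), and also that the maximal $M$ produced by Zorn's Lemma is in fact a strict subset of $A$ so that the condition ``$a \notin M$'' is non-vacuous and the argument in the previous paragraph applies.
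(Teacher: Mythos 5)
Your proof is correct and follows essentially the same route as the paper: order the proper ideals (characterized by $u \notin I$) by inclusion, bound chains by unions, and apply Zorn's Lemma. You add two details the paper glosses over — the verification that the union of a chain is an ideal, and the check that a Zorn-maximal proper ideal satisfies the paper's $\langle M, a \rangle = A$ definition of maximality — both of which are welcome.
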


\begin{proof}
Being $\Sigma $ the set of all of the proper ideals of $A$. $ \Sigma $ is different from empty since the ideal $0 \in \Sigma $, and $\Sigma $ is ordered by inclusion. Being $ (J_\alpha) $ a chain of ideals $J_1 \subset J_2 \subset J_3 \subset \cdots$ in $\Sigma$. We have that $J= \cup_\alpha J_\alpha $ is an ideal that belongs to $\Sigma$ since $u \notin J$ because $u \notin J_\alpha $ for every $\alpha$. Therefore, $J$ is an upper bound of the chain and by Zorn's lemma, $\Sigma $ has at least one maximal element.
\end{proof}

\begin{Cor}
If $I$ is an ideal of $A$ then there is a maximal ideal of $A$ that contains $I$. 
\end{Cor}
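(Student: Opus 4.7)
The plan is to derive the corollary from the previous proposition (every non-trivial MVW-rig has a maximal ideal) together with the ideal-correspondence result, Theorem \ref{Corresp_ideales}. We may assume that $I$ is a proper ideal of $A$, since otherwise $I = A$ and no maximal ideal containing it exists (or, depending on convention, the statement applies only to proper ideals).

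First I would pass to the quotient: since $I \neq A$, the MVW-rig $A/I$ is non-trivial, so by the preceding proposition there is a maximal ideal $\overline{M}$ of $A/I$. Next, let $h:A \to A/I$ be the natural homomorphism and set $M = h^{-1}(\overline{M})$. By Proposition \ref{prohomo}(iii), $M$ is an ideal of $A$, and $M$ contains $I = \ker(h)$.

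To conclude that $M$ is maximal in $A$, I would invoke Theorem \ref{Corresp_ideales}: the correspondence $\widetilde{h}$ between ideals of $A$ containing $I$ and ideals of $A/I$ is an inclusion-preserving bijection, whose inverse is exactly $J \mapsto h^{-1}(J)$. Since $\overline{M}$ is maximal in $A/I$, any ideal strictly containing $M$ in $A$ would contain $I$ and would produce an ideal strictly containing $\overline{M}$ in $A/I$, contradicting maximality. Hence $M$ is a maximal ideal of $A$ containing $I$.

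The argument is essentially a direct translation of the standard ring-theoretic proof, so there is no real obstacle; the only point requiring mild attention is that maximality transfers through the correspondence, and for that it suffices that $\widetilde{h}$ preserves strict inclusion, which follows from its bijectivity and inclusion-preservation already established in Theorem \ref{Corresp_ideales}. An alternative route would be a direct Zorn's lemma argument on the poset of proper ideals of $A$ containing $I$, mimicking the proof of the previous proposition, but going through the quotient is shorter and leverages results already in place.
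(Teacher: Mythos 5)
Your proposal is correct and is exactly the paper's argument: the paper's proof is the one-line remark that the result ``follows directly from the previous proposition applied to $A/I$ and the bijection given in Theorem~\ref{Corresp_ideales},'' which you have simply spelled out (non-triviality of $A/I$, existence of a maximal ideal there, pulling it back along the natural homomorphism, and transferring maximality through the inclusion-preserving bijection). No further comment is needed.
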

 \begin{proof}
It follows directly from the previous proposition applied to $ A / I $ and the bijection given in (\ref{Corresp_ideales}). 
\end{proof} 

\begin{Def}
Given $I$  an ideal of a commutative MVW-rig $A$, we will call \textbf{radical} of $I$ the set
\[ \sqrt{I} = \{ x \in A \mid x^n \in I \text{ for some } n > 0 \} \]
\end{Def}

\begin{Pro}
The radical of an ideal $ I $ of a commutative MVW-rig $ A $ has the following properties:
\begin{itemize}
\item[$i)$] $I \subset \sqrt{I}$.
\item[$ii)$] If $I \subset J$ then $\sqrt{I} \subset \sqrt{J}$ with $J$ ideal of $A$.
\item[$iii)$] If $I$ is the prime ideal, then $I = \sqrt{I}$.
\item[$iv)$] $\sqrt{I \cap J} = \sqrt{IJ} $
\end{itemize}
\end{Pro}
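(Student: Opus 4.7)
The plan is to dispatch the four items in order, using only the definitions of $\sqrt{I}$ and of prime/ideal, together with commutativity of $A$.

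For (i), the argument is immediate: if $x \in I$, take $n=1$ and observe $x^1 = x \in I$, so $x \in \sqrt{I}$.

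For (ii), given $x \in \sqrt{I}$ there exists $n>0$ with $x^n \in I$; since $I \subseteq J$, also $x^n \in J$, so $x \in \sqrt{J}$.

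For (iii), I already have $I \subseteq \sqrt{I}$ from (i). For the reverse inclusion, take $x \in \sqrt{I}$ and let $n>0$ be minimal with $x^n \in I$. If $n=1$ we are done; otherwise write $x^n = x \cdot x^{n-1}$, and use primality of $I$ to conclude $x \in I$ or $x^{n-1} \in I$. The second possibility contradicts minimality of $n$, so $x \in I$. (Equivalently, one runs induction on $n$.)

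For (iv), the key preliminary is $IJ \subseteq I \cap J$. Here I take $IJ$ to be the ideal generated by products $ab$ with $a \in I$, $b \in J$; each such generator lies in $I$ (by absorption, since $a \in I$) and in $J$ (since $b \in J$), and $I \cap J$ is itself an ideal, so it contains $IJ$. From this and (ii) I get $\sqrt{IJ} \subseteq \sqrt{I \cap J}$. For the reverse, let $x \in \sqrt{I \cap J}$, so some $x^n$ lies in both $I$ and $J$; then $x^{2n} = x^n \cdot x^n$ is a product of an element of $I$ with an element of $J$, hence $x^{2n} \in IJ$, giving $x \in \sqrt{IJ}$. Commutativity is used implicitly when regrouping powers.

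The only step with any content is (iii), where primality must be invoked iteratively; the other three items are essentially unfoldings of definitions. The potential snag in (iv) is the ambient convention for $IJ$, but with the standard generator-based definition the argument above goes through without requiring any structural result beyond the absorption property of ideals.
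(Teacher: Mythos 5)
Your four arguments are all correct, and the paper itself states this proposition without proof, so there is no authorial argument to compare against; yours is the natural one. Items (i) and (ii) are indeed immediate from the definition of $\sqrt{I}$, and your minimality (or induction) argument for (iii) uses exactly the multiplicative notion of primality the paper adopts for MVW-rigs, so it goes through verbatim. For (iv) you rightly flag the one real issue: the paper never defines the product $IJ$ of two ideals. With the generator-based convention ($IJ = \langle \{ab \mid a \in I,\ b \in J\}\rangle$) your containment $IJ \subseteq I \cap J$ is justified by the absorption axiom for ideals together with the fact that $I \cap J$ is itself an ideal (closed downward, under $\oplus$, and under multiplication by arbitrary elements), and the reverse direction via $x^{2n} = x^n \cdot x^n$ is fine --- note that this step needs only associativity, not commutativity, though the whole statement is posed for commutative $A$ in any case since $\sqrt{I}$ is only defined there. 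The proof is complete as written.
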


\begin{Pro}
The radical of an ideal I of a commutative MVW-rig is the intersección of the prime ideals that contain I.

\[ \sqrt{I} = \bigcap_{P \supset I} P\]
\end{Pro}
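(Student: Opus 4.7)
The plan is to prove the two inclusions separately, handling the easy one by direct use of the primality condition and the hard one by a Zorn's lemma argument modeled on the classical commutative algebra proof, adapted to the weaker distributive axioms of an MVW-rig.

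For the inclusion $\sqrt{I}\subseteq\bigcap_{P\supset I}P$, I would take $x\in\sqrt{I}$ so that $x^n\in I$ for some $n>0$. Since any prime ideal $P\supseteq I$ contains $x^n$, and $P$ is prime, an easy induction on $n$ (splitting $x^n=x\cdot x^{n-1}$) gives $x\in P$. Thus $x$ lies in every prime ideal containing $I$.

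For the reverse inclusion, I would argue by contrapositive: suppose $x\notin\sqrt{I}$, i.e.\ $x^n\notin I$ for every $n\geq 1$, and produce a prime ideal $P$ with $I\subseteq P$ and $x\notin P$. Let $\Sigma$ be the collection of all ideals $J$ of $A$ with $I\subseteq J$ and $x^n\notin J$ for every $n\geq 1$. Then $I\in\Sigma$, and the union of any chain in $\Sigma$ still avoids every power of $x$ (any such power would already lie in some member of the chain), so Zorn's lemma yields a maximal element $P\in\Sigma$. It remains to check $P$ is prime.

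The main obstacle is verifying primality of $P$ given that the distributive axiom (iv) of Definition \ref{DefMVWrig} is only an inequality. Suppose $a,b\notin P$ but $ab\in P$. By maximality of $P$, both $\langle P,a\rangle$ and $\langle P,b\rangle$ fall outside $\Sigma$, so there exist $n,m$ with $x^n\in\langle P,a\rangle$ and $x^m\in\langle P,b\rangle$. Using the explicit form of the generated ideal, I would write
\[ x^n\leq p\oplus \bigoplus_i c_ia,\qquad x^m\leq q\oplus\bigoplus_j d_jb,\]
with $p,q\in P$ and each $c_i,d_j\in A\cup\mathbb N$. Multiplying and iterating axiom (iv) (and using that $a(nz)\leq n(az)$, which follows from (iv) applied $n-1$ times), I would bound
\[ x^{n+m}=x^n x^m\;\leq\;pq\oplus p\!\left(\bigoplus_j d_jb\right)\oplus \left(\bigoplus_i c_ia\right)\!q\oplus \left(\bigoplus_i c_ia\right)\!\left(\bigoplus_j d_jb\right).\]
The first three summands belong to $P$ by the absorption property of the ideal $P$ (since $p,q\in P$), and using commutativity of $A$ each term in the last summand is dominated by a product involving the factor $ab\in P$, so the last summand also lies in $P$. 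Since ideals are downward closed, $x^{n+m}\in P$, contradicting $P\in\Sigma$. Hence $P$ is prime, $I\subseteq P$, $x\notin P$, completing the proof.
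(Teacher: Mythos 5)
Your proposal is correct and follows essentially the same route as the paper: the same easy inclusion via primality, the same set $\Sigma$ of ideals containing $I$ and avoiding all powers of $x$, Zorn's lemma, and the same multiplication of the two bounds $x^n\leq p\oplus\bigoplus_i c_i a$ and $x^m\leq q\oplus\bigoplus_j d_j b$ using axiom (iv) and the absorption property to establish primality of the maximal element. The only cosmetic difference is that you run the primality check by contradiction from $ab\in P$, whereas the paper concludes $x^{n+m}\in P\oplus\langle zy\rangle$ and hence $zy\notin P$ directly; the underlying computation is identical.
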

\begin{proof}
Given $x \in \sqrt{I}$ there exists $n \in N$ that $x^n \in I$, then $x^{n} \in P$ for every $P \supset I$, which implies that $x \in P$ for every $P \supset I$ because $P$ is prime ideal and therefore $x \in \bigcap\limits_{P \supset I} P$. On the other hand, given $x \notin \sqrt{I}$. Being $\Sigma$ the set of ideals $J$ that contain $I$ with the property
  
\[ n > 0 \Rightarrow x^n \notin J\]

$\Sigma$ isn't empty because $I \in \Sigma$. We must show that every chain in $\Sigma$ has a upper bound in $\Sigma$. For $J_0 \subset J_1 \subset J_2...$ the join is $ K= U_{i=0}^{\infty }J_i$. $K$ is ideal of $A$ because $0 \in K$; if $b,c \in K$ then $b\in J_{i_1}, c \in J_{i_2}$, let's suppose that $J_{i_1} \subset J_{i_2}$, then $ b \in J_{i_2}$, then $b \oplus c \in J_{i_2}$ and $b \oplus c \in K$ (the same for $J_{i_2} \supset J_{i_1}$); if $b \in K$ and $a \leq b$ then $b \in J_i$ for any $i$, then $a \in J_i$ and $a \in K$; given $b \in K$ and $a \in A$ we have that $b \in J_i$ for any $i$, then $ba \in J_i$ and so $ba \in K$. Now, by the Zorn's lemma $\Sigma$ contains at least one maximal element. Given P a maximal element of $ \Sigma $ and we'll show that it's pime. Given $z,y \notin P$ then $P \oplus \langle z \rangle $, $P \oplus \langle y \rangle $ contain strictly $P$ and therefore aren't in $\Sigma$. Then there exists integers $n,m>0$ such that

\[x^{n} \in P \oplus \langle z \rangle , \qquad x^{m} \in P \oplus\langle y \rangle \]

Then $x^{n} \leq p_1 \oplus z_1$ and $x^{m} \leq p_2 \oplus y_1$ where $p_1, p_2 \in P$, $z_1 \in \langle z \rangle $ and $y_1 \in \langle y \rangle $. We have that $x^{n+m}=x^{n}x^{m} \leq (p_1 \oplus z_1)(p_2 \oplus y_1) \leq (p_1 \oplus z_1)p_2 \oplus (p_1 \oplus z_1)y_1 \leq (p_1 \oplus z_1)p_2 \oplus p_1y_1 \oplus z_1y_1 = p_3 \oplus z_1y_1$ where $p_3 \in P$ this way we have that $x^{n+m} \in P \oplus \langle zy \rangle $. Then $P \oplus \langle zy \rangle  \notin \Sigma$ and therefore $ zy \notin P$. Then $P$ is prime. This way we have a prime ideal $P$ that contains $I$ such that $x \notin P$, then $ x \notin \bigcap\limits_{P \supset I} P$. This concludes the proof.
\end{proof}

\begin{Cor}\label{Nilradical}
The nilradical $N$ of a commutative MVW-rig $A$ is the intersection of all prime ideals of $A$.
\end{Cor}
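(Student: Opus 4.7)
The plan is to obtain this corollary as an immediate specialization of the preceding proposition, which states that for any ideal $I$ of a commutative MVW-rig $A$ one has $\sqrt{I}=\bigcap_{P\supset I}P$, where the intersection runs over prime ideals containing $I$.

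First I would apply this to the zero ideal $I=\{0\}$, which is an ideal of $A$ (it is an MV-ideal by Definition of MV-ideal and absorbs products trivially since $a\cdot 0 = 0\cdot a = 0$ by axiom $iii)$ of Definition \ref{DefMVWrig}). Next I would unfold the two sides of the equality in this special case: on the left, by the definition of radical, $\sqrt{\{0\}}=\{x\in A\mid x^n\in\{0\}\text{ for some }n>0\}=\{x\in A\mid x^n=0\text{ for some }n>0\}$, which is exactly the nilradical $N$ by definition. On the right, the condition $P\supset\{0\}$ is automatic for every ideal $P$ since $0\in P$ by the first axiom of MV-ideal, so the intersection $\bigcap_{P\supset\{0\}}P$ ranges over all prime ideals of $A$.

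Combining these two observations yields $N=\sqrt{\{0\}}=\bigcap_{P\text{ prime}}P$, which is the statement of the corollary. There is essentially no obstacle here: the work has already been done in the preceding proposition, and the only thing to check is that the radical of the zero ideal coincides with the nilradical as previously defined, which is immediate from the two definitions. One might also remark that the direction $N\subset\bigcap P$ alone is already known from Proposition \ref{NilradyPrimo}, so the real content of the corollary is the reverse inclusion, which is precisely the nontrivial half of the preceding proposition (the Zorn's-lemma construction of a prime ideal avoiding a given non-nilpotent element).
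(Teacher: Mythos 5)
Your proposal is correct and follows essentially the same route as the paper: both obtain the corollary as an immediate specialization of the preceding proposition on radicals (you instantiate it at $I=\{0\}$ and identify $\sqrt{\{0\}}$ with $N$, while the paper phrases the same deduction via Proposition \ref{NilradyPrimo}); the two readings are trivially equivalent.
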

\begin{proof}
By proposition (\ref{NilradyPrimo}) all of the prime ideals of $A$ contain the nilradical, then applying the previous proposition we arrive at the result.
\end{proof}

\section{The prime spectrum of a MVW-rig}

Now we will characterize the prime spectrum of a MVW-rig by passing prime-spectrum theorems of unitary commutative rings to said structures. Henceforth, when we speak of MVW-rig it will be understood as a commutative MVW-rig with unitary element.

\begin{Def}
Given a MVW-rig $A$, we call \textbf{prime spectrum of $A$} or $Spec(A)$ to the set of prime ideals of $A$ and for every $a \in A $ we define:
\[V(a)= \{ P \in Spec(A) \mid a \in P \} \]
\end{Def}

\begin{Pro}
The collection $ \{ V(a)\}_{a \in A} $ has the following properties that characterize a base of a topological space:
\begin{itemize}
\item[$i)$] $V(a) \cap V(b) = V(a \oplus b )$ for every $a,b \in A $
\item[$ii)$] $V(0) = Spec(A)$ 
\item[$iii)$]$V(u) = \emptyset $
\end{itemize}
\end{Pro}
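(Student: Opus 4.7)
The plan is a direct verification from the MV-ideal axioms; the multiplicative structure of the MVW-rig plays no role in this proposition. For $(ii)$, I would observe that axiom $(i)$ of an MV-ideal forces $0 \in P$ for every ideal $P$, so every $P \in Spec(A)$ lies in $V(0)$, giving $V(0) = Spec(A)$.

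For $(iii)$, I plan to use the convention (implicit here and also needed for results such as Corollary \ref{Nilradical}) that a prime ideal is proper. Since $a \leq u$ for every $a \in A$, if $u$ lay in a prime ideal $P$, then the downward-closure axiom $(ii)$ of an MV-ideal would force $P = A$, contradicting properness. Hence no prime ideal contains $u$, and $V(u) = \emptyset$.

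For $(i)$, I would rely on the elementary MV-algebra fact $a, b \leq a \oplus b$ (which follows from $a = a \oplus 0$, together with $0 \leq b$ and the monotonicity of $\oplus$, already used implicitly throughout the paper). The forward inclusion $V(a) \cap V(b) \subseteq V(a \oplus b)$ is then immediate from closure of an ideal under $\oplus$ (MV-ideal axiom $(iii)$): if $a,b \in P$ then $a \oplus b \in P$. The reverse inclusion follows by applying downward closure twice: if $a \oplus b \in P$, then both $a \in P$ and $b \in P$ via $a \leq a \oplus b$ and $b \leq a \oplus b$.

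The proof has essentially no obstacle beyond making the properness convention for prime ideals explicit; once that is fixed, each of the three items reduces to a one-line consequence of the MV-ideal axioms, with the multiplicative structure of $A$ not intervening at all.
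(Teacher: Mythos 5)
Your proof is correct and follows essentially the same route as the paper's: $(ii)$ and $(iii)$ from $0\in P$ and $u\notin P$ for proper prime ideals, and $(i)$ from the equivalence $a\in P$ and $b\in P$ $\Leftrightarrow$ $a\oplus b\in P$, which the paper states as a biconditional and you justify via closure under $\oplus$ and downward closure with $a,b\leq a\oplus b$. Your explicit remark that properness of prime ideals is the (implicit) convention needed for $(iii)$ is a useful clarification but not a departure from the paper's argument.
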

\begin{proof}
$i)$ $P\in V(a) \cap V(b) \Leftrightarrow a \in P$ and $b\in P \Leftrightarrow a\oplus b \in P\Leftrightarrow P\in V(a \oplus b)$. $ii)$ $0 \in P$ for every $P \in Spec(A)$. $iii)$ $u \notin P$ for every $P \in Spec(A)$.\\
\end{proof}

From the above, we have that the collection $ \{ V(a)\}_{a \in A} $ form a base for a topology, called the \textbf{Co-Zariski topology}:

\begin{Def}
Given a MVW-rig $A$, we define the topological space $Spec(A)$ whose points are the prime ideals of $ A $ and whose open ones are generated by the base  $ \{ V(a)\}_{a \in A} $. 
\end{Def}

\begin{Pro}
Given $A$ a MVW-rig and $a,b$ elements of $A$, then:
\begin{itemize}
\item [i)] $V(a) \cup V(b) = V (ab)$
\item [ii)] $V (ab) \subset V (a \wedge b)$
\item [iii)] $V(a) \cap V(b) = V(a \vee b)$
\item [iv)] $V (a) = Spec(A)$ if and only if $ a $ is nilpotent.
\end{itemize} 
\end{Pro}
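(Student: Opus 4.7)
My plan is to handle the four items separately. The arguments rely on three standard facts about a prime ideal $P$ in an MVW-rig: primeness itself (from the definition, $ab\in P$ forces $a\in P$ or $b\in P$), the absorbing property of ideals with respect to the product (if $a\in P$ and $z\in A$, then $az\in P$), and downward closure of $P$ under $\leq$.

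For (i), I would first note that if $a\in P$, absorption gives $ab\in P$, so $V(a)\subseteq V(ab)$; symmetrically $V(b)\subseteq V(ab)$. The reverse inclusion $V(ab)\subseteq V(a)\cup V(b)$ is just the definition of prime ideal. For (ii), I would argue directly by primeness: if $ab\in P$, then $a\in P$ or $b\in P$; since $a\wedge b\leq a$ and $a\wedge b\leq b$, downward closure of $P$ yields $a\wedge b\in P$ in either case. One might be tempted to try to establish the stronger inequality $ab\leq a\wedge b$ and deduce (ii) from downward closure alone, but this inequality fails in general: already in the MVW-rig $Z_n$ one has $2\cdot 2=4 > 2$, so $ab$ need not lie below $a\wedge b$. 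Routing through primeness sidesteps this.

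For (iii), the inclusion $V(a\vee b)\subseteq V(a)\cap V(b)$ is immediate from $a,b\leq a\vee b$ and downward closure. For the other inclusion I would use the standard MV-algebra fact $a\vee b\leq a\oplus b$: if $a,b\in P$ then $a\oplus b\in P$ by the ideal axioms, whence $a\vee b\in P$. Finally, (iv) is a direct application of Corollary \ref{Nilradical}: $V(a)=Spec(A)$ says exactly that $a$ belongs to every prime ideal of $A$, i.e. $a\in\bigcap_{P\in Spec(A)} P=N$, which by definition means $a$ is nilpotent.

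The only genuinely subtle point I expect is (ii), because the inequality most readers would reach for, namely $ab\leq a\wedge b$, is not available in an arbitrary MVW-rig; the argument has to be reorganized to apply primeness of $P$ first and the lattice comparison second.
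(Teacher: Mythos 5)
Your proposal is correct, and in fact the paper states this proposition without any proof, so there is nothing to diverge from: your arguments (absorption plus primeness for (i), primeness then downward closure for (ii), downward closure plus $a\vee b\leq a\oplus b$ for (iii), and Corollary \ref{Nilradical} for (iv)) are exactly the routine ones the omission presupposes. Your caution in (ii) is well placed: the tempting shortcut $ab\leq a\wedge b$ does fail in $Z_n$, as the paper itself notes when remarking that there the product of two elements dominates both factors, so reordering the argument to apply primeness first is the right move.
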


\begin{Pro}
Given $a,b$ elements of a MVW-rig $A$, then $V(a) \subset V(b)$ if and only if $\sqrt{\langle b \rangle } \subset \sqrt{\langle a \rangle }$.
\end{Pro}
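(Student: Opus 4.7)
The proof plan rests on the characterization of the radical as the intersection of the prime ideals containing the ideal, which has just been established:
$$\sqrt{\langle a\rangle} = \bigcap_{P\in V(a)} P, \qquad \sqrt{\langle b\rangle} = \bigcap_{P\in V(b)} P,$$
since $P \supset \langle a\rangle$ is equivalent to $a\in P$, that is $P\in V(a)$ (the principal ideal is the smallest ideal containing the element), and similarly for $b$. With this reformulation the proposition essentially becomes a tautology about intersections of families of sets.

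For the direction $(\Rightarrow)$, assume $V(a)\subset V(b)$. Then the family of primes indexed by $V(b)$ contains the family indexed by $V(a)$, so the intersection over the larger family is contained in the intersection over the smaller one:
$$\sqrt{\langle b\rangle}=\bigcap_{P\in V(b)} P \;\subset\; \bigcap_{P\in V(a)} P=\sqrt{\langle a\rangle}.$$
No further ingredient is required.

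For the direction $(\Leftarrow)$, assume $\sqrt{\langle b\rangle}\subset\sqrt{\langle a\rangle}$ and take $P\in V(a)$, so $a\in P$, hence $\langle a\rangle\subset P$. By monotonicity of the radical (property $(ii)$ of the radical proposition) we get $\sqrt{\langle a\rangle}\subset \sqrt{P}$; and since $P$ is prime, property $(iii)$ gives $\sqrt{P}=P$. Therefore
$$b\in\langle b\rangle\subset \sqrt{\langle b\rangle}\subset\sqrt{\langle a\rangle}\subset P,$$
so $P\in V(b)$. This yields $V(a)\subset V(b)$.

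No step is a serious obstacle: the whole argument is driven by the two properties of the radical (it is inclusion-preserving and fixed on primes) plus the characterization as an intersection of primes. The only point worth being careful about is the elementary observation that $a\in P$ is equivalent to $\langle a\rangle\subset P$, which is immediate from the absorbing property of an ideal together with the explicit description of $\langle a\rangle$ given earlier.
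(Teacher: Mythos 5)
Your proof is correct and follows essentially the same route as the paper: the forward direction is the identical intersection-of-primes argument, and the reverse direction unpacks the same fact the paper uses (that a prime contains $\langle a\rangle$ if and only if it contains $\sqrt{\langle a\rangle}$). If anything, your version of the converse is slightly more self-contained, since you justify $\sqrt{\langle a\rangle}\subset\sqrt{P}=P$ explicitly where the paper silently invokes $V(I)=V(\sqrt{I})$.
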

\begin{proof}
Given $x \in \sqrt{\langle b \rangle }$ then $x \in \bigcap\limits_{P \supset \langle b \rangle } P$ with $P$ prime ideal, then $x \in P$ for every $P \supset \langle b \rangle $, in particular $x \in P$ for every $P \supset \langle a \rangle $ because $ V(a) \subset V(b)$, then $ x \in \bigcap\limits_{P \supset \langle a \rangle } P$ and therefore $x \in \sqrt{\langle a \rangle }$.
On the other hand, given $\sqrt{\langle b \rangle } \subset \sqrt{ \langle a \rangle }$, $ V (\sqrt{\langle a \rangle }) \subset V (\sqrt{\langle b \rangle })$ then $V(a) = V (\langle a \rangle ) = V (\sqrt{\langle a \rangle }) \subset V(\sqrt{ \langle b \rangle }) = V (\langle b \rangle ) = V(b)$.
\end{proof}
	
\begin{Pro}
Given $A$ a MVW-rig, $Spec(A)$ is a topological space $T_0$
\end{Pro}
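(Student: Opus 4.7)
The plan is to show directly that any two distinct prime ideals can be separated by a basic open set of the Co-Zariski topology. Recall that a space is $T_0$ exactly when for any two distinct points there exists an open set containing one of them but not the other, so it suffices to work with the base $\{V(a)\}_{a\in A}$.

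First I would take two distinct points $P,Q \in \text{Spec}(A)$. Since $P \neq Q$ as subsets of $A$, the symmetric difference $(P\setminus Q) \cup (Q\setminus P)$ is non-empty, so without loss of generality there exists $a \in A$ with $a \in P$ and $a \notin Q$. Then by the very definition of $V(a) = \{R \in \text{Spec}(A) \mid a \in R\}$, we have $P \in V(a)$ while $Q \notin V(a)$. Since $V(a)$ is a basic open set of the Co-Zariski topology, this is an open neighborhood of $P$ that does not contain $Q$, which establishes the $T_0$ separation axiom.

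There is really no obstacle here: the proof is a one-line observation once the definitions are unpacked, because the basis $\{V(a)\}_{a\in A}$ is naturally indexed by the elements that can belong to prime ideals. Note that no use of primality, nor of the MVW-rig structure beyond the setup, is required for this particular separation result; primality is built into the definition of $\text{Spec}(A)$ but plays no active role in the argument. I would not expect the proof to exceed two or three lines in the final write-up.
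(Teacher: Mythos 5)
Your proof is correct and follows essentially the same route as the paper: pick an element $a$ in the symmetric difference of the two distinct primes $P$ and $Q$, and observe that the basic open $V(a)$ contains exactly one of them. The paper's only additional (and inessential) remark is that such an $a$ is automatically not in the nilradical, since it lies outside one of the prime ideals.
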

\begin{proof}
Given $P, Q \in Spec(A)$ with $P \neq Q$, then there exists $a \in A , a \notin N$ ($N$ the nilradical of $A$), such that $a \in P$ and $a \notin Q$ or $a \notin P$ and $ a \in Q$, then $ P\in V(a)$ and $ Q \notin V (a)$ or $ P \notin V(a)$ and $ Q \in V (a)$.
\end{proof}

Let's remember that given $X$ a set of a topological space, the closure of $X$ noted by $\overline{X}$ is defined as the intersection of all closed sets containing $X$, then a point $x$ belongs to $\overline{X}$ if and only if for every basic opening $B$ that contains $x$, $ B \cap X \neq \emptyset $.

\begin{Pro}\label{clausura}
Given $Q,P \in Spec (A)$ for a MVW-rig $A$, then $Q \in \overline{ \{P \} }$ if and only if $Q \subset P$. 
\end{Pro}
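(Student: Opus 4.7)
The plan is to unpack the definition of closure in terms of the stated base $\{V(a)\}_{a\in A}$ for the Co-Zariski topology, and then observe that membership in $V(a)$ is \emph{literally} the condition ``$a$ belongs to the ideal.'' Once translated this way, each direction of the biconditional reduces to a one-line set-theoretic check.

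First I would record the base-of-topology reformulation of closure already mentioned before the statement: since $\{V(a)\}_{a\in A}$ is a base, a point $Q$ lies in $\overline{\{P\}}$ exactly when every basic open $V(a)$ that contains $Q$ also contains $P$. Using the defining equality $V(a)=\{R\in\mathrm{Spec}(A)\mid a\in R\}$, this says: for every $a\in A$, if $a\in Q$ then $a\in P$, which is precisely $Q\subseteq P$.

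With that observation, both directions are immediate. For $(\Rightarrow)$, assume $Q\in\overline{\{P\}}$ and pick any $a\in Q$; then $Q\in V(a)$ is a basic open neighbourhood of $Q$, so by the closure condition $P\in V(a)$, i.e.\ $a\in P$; thus $Q\subseteq P$. For $(\Leftarrow)$, assume $Q\subseteq P$ and let $V(a)$ be any basic open containing $Q$; then $a\in Q\subseteq P$, so $P\in V(a)$, whence every basic open through $Q$ meets $\{P\}$, giving $Q\in\overline{\{P\}}$.

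There is no real obstacle; the main conceptual point is simply to note that in the Co-Zariski convention the roles of ``containing $a$'' and ``being in $V(a)$'' coincide, which reverses the inclusion one might naively expect from the Zariski-style specialization order on $\mathrm{Spec}$ of a ring. I would make this reversal explicit in the write-up to avoid confusion with the classical commutative-ring picture.
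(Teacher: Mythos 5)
Your proof is correct and follows essentially the same route as the paper's: both reduce membership in $\overline{\{P\}}$ to the condition that every basic open $V(a)$ containing $Q$ also contains $P$, and then translate $R\in V(a)\iff a\in R$ to obtain $Q\subseteq P$. Your explicit remark about the reversal of the specialization order relative to the classical Zariski picture is a helpful addition but does not change the argument.
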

\begin{proof}
If $ Q \in \overline{ \{ P \} } $ then for every $b \in Q$ we have that $V(b) \cap \{ P \} \neq \emptyset$, then $P \in V(b)$ for every $b \in Q$ and this implies that $b \in P$ for every $b \in Q$, then $Q \subset P$. On the other hand, if $ Q \subset P$, then for every $b \in Q$, $P \in V(b)$, which implies that $ V(b) \cap \{ P \}  \neq \emptyset $ for every $b \in Q$ and therefore $Q \in \overline{ \{ P \} }$.
\end{proof}

\begin{Pro}\label{clausura1}
Given $Q \in Spec (A)$ for a MVW-rig $A$, and $U$ a subset of $Spec(A)$. If $Q \subset P$ for any $P \in U$ then $Q \in \overline{U} $
\end{Pro}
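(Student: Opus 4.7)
The plan is to apply the characterization of closure recalled just before Proposition \ref{clausura}: namely, $Q \in \overline{U}$ if and only if every basic open neighborhood of $Q$ meets $U$. Since the basic opens of the co-Zariski topology are the sets $V(a)$ for $a \in A$, it suffices to show that whenever $Q \in V(a)$, we have $V(a) \cap U \neq \emptyset$.

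Fix some $P \in U$ with $Q \subset P$ given by the hypothesis. Suppose $V(a)$ is a basic open containing $Q$; by definition of $V(a)$ this means $a \in Q$. Using the inclusion $Q \subset P$, we get $a \in P$, hence $P \in V(a)$. Since also $P \in U$, we conclude $V(a) \cap U \neq \emptyset$. This holds for every basic open $V(a)$ containing $Q$, so $Q \in \overline{U}$.

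In effect, the argument is the natural generalization of the forward direction of Proposition \ref{clausura}, obtained by choosing a single witness $P \in U$ dominating $Q$ and transporting membership in a basic open along the inclusion $Q \subset P$. There is no real obstacle here; the only subtlety is the quantifier in the hypothesis, which should be read existentially (there exists $P \in U$ with $Q \subset P$) since the universal reading would make the conclusion nearly tautological only when $U$ is empty or trivial. Under the existential reading the proof above is complete in one line.
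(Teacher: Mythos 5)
Your proof is correct; the paper states this proposition without proof, and your argument is precisely the intended one, inlining the forward direction of Proposition \ref{clausura}: if $a \in Q \subseteq P$ then $P \in V(a) \cap U$, so every basic open around $Q$ meets $U$. Equivalently one could just cite Proposition \ref{clausura} together with monotonicity of closure, $Q \in \overline{\{P\}} \subseteq \overline{U}$; and your reading of ``for any $P \in U$'' as existential is the right one, since for $U = \emptyset$ the universal reading is vacuous while $\overline{U} = \emptyset$.
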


\begin{Obs}\label{clausura2}
The opposite of the previous proposition is true if the set $ U $ has a single maximal element.
\end{Obs}

A topological space $X$ is irreducible if $X \neq \emptyset $ and the intersection of two non-empty openings is non-empty.

\begin{Pro}
For a MVW-rig $A$, $Spec(A)$ is irreducible if and only if $A$ has a single maximal ideal.
\end{Pro}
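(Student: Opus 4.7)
The plan is to reduce irreducibility to a purely algebraic condition about the ideals of $A$. Since the $V(a)$ form a basis of the topology, $Spec(A)$ is irreducible iff any two non-empty basic opens have non-empty intersection. Using $V(a)\cap V(b)=V(a\oplus b)$ and the observation that $V(a)\neq\emptyset$ iff $\langle a\rangle\neq A$ (which in turn uses the corollary that every proper ideal sits inside some maximal, hence prime, ideal), irreducibility is equivalent to the statement: whenever $\langle a\rangle$ and $\langle b\rangle$ are both proper, so is $\langle a\oplus b\rangle$.

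For the implication from uniqueness of a maximal ideal $M$ to irreducibility, I would simply observe that if $V(a)$ and $V(b)$ are non-empty then $a$ and $b$ each lie in some prime and therefore in the unique maximal $M$, so $a\oplus b\in M$ and hence $M\in V(a\oplus b)=V(a)\cap V(b)$; non-emptiness of $Spec(A)$ itself is automatic from the existence of $M$.

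For the converse direction, assuming $Spec(A)$ irreducible, I would introduce the candidate set
\[ J=\{a\in A\mid \langle a\rangle\neq A\}. \]
Downward closure of $J$ and absorption under multiplication follow from $\langle xa\rangle\subseteq\langle a\rangle$, while $u\notin J$ because $\langle u\rangle=A$. The only non-automatic ideal axiom is closure of $J$ under $\oplus$, and this is precisely the algebraic reformulation of irreducibility isolated in the first paragraph. So $J$ is a proper ideal, and by the corollary it sits inside some maximal $M'$; but conversely every maximal $M''$ is contained in $J$, since each $a\in M''$ satisfies $\langle a\rangle\subseteq M''\neq A$. From $M''\subseteq J\subseteq M'$ and the maximality of $M''$ one deduces $M''=M'$, so $M'$ is the unique maximal ideal. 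The only step demanding actual insight is identifying $J$ as the right candidate for the unique maximal; after that the verification is mechanical and leans only on the earlier corollary that every proper ideal lies in a maximal one together with the fact that maximal ideals in a commutative unitary MVW-rig are prime.
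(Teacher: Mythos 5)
Your proof is correct, and the backward direction (unique maximal ideal implies irreducible) coincides with the paper's. For the forward direction, however, you take a genuinely different route. The paper argues by contraposition: from two distinct maximal ideals $M_1 \neq M_2$ it extracts, via maximality of $M_2$, elements $x \in \langle a \rangle \subseteq M_1$ and $b \in M_2$ with $1 \leq x \oplus b$, and then exhibits $V(x)$ and $V(b)$ as two non-empty basic opens with $V(x) \cap V(b) = V(x \oplus b) = \emptyset$. You instead argue directly, first translating irreducibility into the algebraic statement that properness of $\langle a \rangle$ and $\langle b \rangle$ forces properness of $\langle a \oplus b \rangle$ (using that $V(a) \neq \emptyset$ iff $\langle a \rangle \neq A$, which rests on the corollary that every proper ideal lies in a maximal, hence prime, ideal), and then showing that $J = \{ a \mid \langle a \rangle \neq A \}$ is a proper ideal containing every maximal ideal, which pins down the unique maximal ideal. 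Your argument buys a little more than the statement asks: it identifies the unique maximal ideal explicitly as the set of elements generating proper ideals, in close analogy with the characterization of local rings, whereas the paper's contrapositive is shorter and makes the failure of irreducibility visibly geometric (two disjoint non-empty basic opens). Both hinge on the same two ingredients from earlier in the paper: maximal ideals are prime in a commutative unitary MVW-rig, and every proper ideal extends to a maximal one.
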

\begin{proof}
Let's suppose that $A$ has at least two maximal ideals $M_1$ and $M_2$, then given $a \in M_1, a \notin M_2$ there exists $b \in M_2$ such that $ x \oplus b = 1$ with $x \in \langle a \rangle $, and $b \notin M_1$ because $M_1$ is it's own ideal. It results that $V(x)$ and $V(b)$ are non-empty openings because $M_1 \in V(x)$ and $ M_2 \in V(b)$, then $ \emptyset = V(\langle 1 \rangle ) = V( \langle x \oplus b \rangle ) = V(x \oplus b ) = V(x) \cap V(b)$, that is to say,  $V(x) \cap V(b) = \emptyset $ which implies that $Spec(A)$ is not irreducible. On the other hand, if $A$ has exactly a maximal ideal, let's take $V(a) \neq \emptyset$ and $V(b) \neq \emptyset$, this implies that $a \in M$ and $b \in M$, then $M \in V(a) \cap V(b)$ and therefore $A$ is irreducible. 
\end{proof}

\begin{Pro}\label{funcionSpec}
Given $\phi : A \rightarrow B$ a homomorphism of MVW-rigs, we define $\phi ^{*} : Spec (B) \rightarrow Spec (A)$ such that given $J$ ideal of $B$, $\phi ^{*}(J) = \begin{Bmatrix} 	x \in A \mid \phi (x) \in J	\end{Bmatrix}= \phi^{-1}(J)$. Then:
\begin{itemize}
\item [i)] $\phi ^{*}$ is a continuous function between topological spaces.
\item [ii)] If $I$ is an ideal of A then $(\phi^{*}) ^{-1} (V(I)) = V( \phi(I))$
\item [iii)] If $\phi$ is injective, $\phi^* (V(b)) = V ( \phi^{-1} (b))$
\item [iv)] If $\phi$ is bijective, then $\phi^*$ is a homeomorphism between $Spec(B)$ and $V(Ker(\phi))$.
\item [v)] If $\phi$ is injective, then $\phi^*(Spect(B)) = Spec(A)$. 
\end{itemize}
\end{Pro}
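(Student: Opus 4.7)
Parts (i) and (ii) follow by unwinding definitions. For (i), since $\{V(a)\}_{a\in A}$ is a base of $Spec(A)$, it suffices to show that $(\phi^*)^{-1}(V(a))$ is open for every $a\in A$; a direct calculation gives
\[
(\phi^*)^{-1}(V(a)) = \{Q\in Spec(B)\mid a\in \phi^{-1}(Q)\} = \{Q\in Spec(B)\mid \phi(a)\in Q\} = V(\phi(a)),
\]
a basic open of $Spec(B)$. Part (ii) is the same computation carried out simultaneously for each $a\in I$: writing $V(I)=\bigcap_{a\in I}V(a)$ and distributing the preimage across the intersection yields $(\phi^*)^{-1}(V(I))=V(\phi(I))$.

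For (iv), I would show that a bijective homomorphism $\phi$ admits an inverse $\phi^{-1}$ which is also an MVW-rig homomorphism (apply $\phi$ to each defining identity and use injectivity to cancel), so that $(\phi^{-1})^*$ serves as a two-sided inverse of $\phi^*$ on spectra; continuity of both maps by (i) delivers a homeomorphism. Since $Ker(\phi)=\{0\}$ in this case, $V(Ker(\phi))=V(0)=Spec(A)$, so the target coincides with the full spectrum as asserted.

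The core of the argument lies in (iii) and (v), which both rest on lifting a prime of $A$ to a prime of $B$ with the correct contraction. For (v), given $P\in Spec(A)$, the set $S=A\setminus P$ is multiplicatively closed since $P$ is prime, hence so is $\phi(S)\subset B$. I would apply Zorn's lemma to the family of ideals of $B$ containing $\phi(P)$ and disjoint from $\phi(S)$. A maximal such $Q$ is prime by a standard argument: if $xy\in Q$ with $x,y\notin Q$, then maximality places elements of $\phi(S)$ in both $\langle Q,x\rangle$ and $\langle Q,y\rangle$, whose product lies in $Q\cap\phi(S)$, a contradiction. Then $P\subset\phi^{-1}(Q)$ together with $\phi^{-1}(Q)\cap S=\emptyset$ forces $\phi^{-1}(Q)=P$, giving (v). Part (iii) is the analogous Zorn argument applied to ideals containing $\phi(P)\cup\{b\}$ and disjoint from $\phi(S)$, for each $P\in V(\phi^{-1}(b))$.

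The main obstacle will be verifying non-emptiness of the Zorn family, i.e., that $\langle\phi(P)\rangle\cap\phi(S)=\emptyset$. Suppose $\phi(s)\leq \bigoplus_i a_i\phi(p_i)$ with $s\in S$, $p_i\in P$, and $a_i\in B$: injectivity of $\phi$ does not directly transfer this inequality back to $A$, since the coefficients $a_i$ need not lie in $\phi(A)$. I expect to push the argument through by multiplying both sides by $\phi(s)$ and iterating the distributive inequalities of Definition \ref{DefMVWrig}(iv)-(v), using commutativity and the absorbing property of $P$ to replace the $a_i$ modulo $\phi(P)$, and ultimately produce an inequality inside $\phi(A)$ from which injectivity of $\phi$ and primality of $P$ yield the desired contradiction.
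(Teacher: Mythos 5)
Your handling of (i) and (ii) matches the paper's computation. For (iv) you take a genuinely different and cleaner route: the paper factors $\phi$ through $A/Ker(\phi)$ and gets openness of $\phi^*$ from part (iii), whereas you invert $\phi$ directly (a bijective homomorphism of algebras is an isomorphism, so $(\phi^{-1})^*$ is a continuous two-sided inverse of $\phi^*$); this is correct and avoids any dependence on (iii).

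The genuine gap is in (iii) and (v), and it is exactly the step you flag but do not close. Your Zorn construction is sound in every respect except the disjointness $\langle\phi(P)\rangle\cap\phi(S)=\emptyset$ (and, for (iii), $\langle\phi(Q)\cup\{b\}\rangle\cap\phi(S)=\emptyset$), and the repair you sketch does not work as stated: multiplying $\phi(s)\leq\bigoplus_i a_i\phi(p_i)$ by $\phi(s)$ and redistributing leaves the foreign coefficients $a_i\in B\setminus\phi(A)$ in place, and in a general MVW-rig there is no mechanism to dominate $a_i\phi(p_i)$ by an element of $\phi(P)$, because a product may exceed both of its factors (in $Z_n$ one has $2\cdot 2=4>2$; the paper itself remarks on this). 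The disjointness does follow under the additional hypothesis $xy\leq x\wedge y$ (as in $F_u[x]$), since then $\bigoplus_i a_i\phi(p_i)\leq\phi(\bigoplus_i p_i)$ and injectivity of $\phi$ together with the definition of the order give $s\leq\bigoplus_i p_i\in P$, a contradiction; but nothing in Definition \ref{DefMVWrig} supplies this inequality in general. You have, to your credit, located exactly where the content of the proposition lives: the paper's own proof of (iii) is a chain of set identities in which the step $\{Q\mid Q=\phi^{-1}(P),\ b\in P\}=\{Q\mid b\in\phi(Q)\}$ silently assumes precisely this prime-lifting, and the paper then deduces (v) from (iii) with $b=0$. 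So your proposal is incomplete at the same point where the paper's argument is thin, but your final paragraph is a plan rather than a proof, and the plan as described would fail without a further hypothesis on the product.
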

\begin{proof}
\begin{itemize}
\item [i)] First, let's see that for $J \in Spec (B), \phi ^{*} (J) \in Spec (A)$. $ 0 \in \phi ^{*} (J)$ because $\phi (0) = 0 \in J$. Given $ x, y  \in \phi ^{*} (J)$ then $ \phi (x), \phi (y) \in J$, then $\phi (x) \oplus \phi (y) = \phi ( x\oplus y) \in J$ and so $ x \oplus y \in \phi ^{*} (J)$. Given $x \in \phi ^{*} (J)$ and $y \leq x$, then $ \phi (y) \leq \phi (x)$ and as $\phi (x) \in J$ then $\phi (y) \in J$, so $y \in \phi ^{*}(J)$. Given $x \in \phi ^{*} (J)$ and $a \in A$, then $\phi (x) \in J$, as $J$ is ideal then $\phi (a) \phi (x) = \phi (ax) \in J$, then $ax \in \phi ^{*} (J)$. To prove it's prime let's take $ xy \in \phi ^{*} (J)$ then $ \phi (xy) \in J$ and as $J$ is prime then $\phi (x) \in J$ or $ \phi (y) \in J$ then $x \in \phi ^{*} (J)$ or $ y \in \phi ^{*} (J)$. \\
Now let's prove that $\phi^{*}$ is continuous. Given $V(a) \in Spec(A)$ let's demonstrate that $(\phi^{*}) ^{-1} (V(a))$ is an opnening in $Spec(B)$. $(\phi^{*})^{-1} (V(a)) =  \{P \in Spec(B) \mid \phi^{*} (P) \in V(a)\} = \{ P \in Spec(B) \mid a \in \phi^{*} (P) \} = \{ P \in Spec(B) \mid \phi(a) \in P\} = V(\phi(a)) \in Spec(B)$.

\item [ii)] Given $P \in Spec(A)$, $P \in (\phi^{*}) ^{-1} (V(I)) \Leftrightarrow \phi^{*} (P) \in V (I) \Leftrightarrow I \subset \phi^{*} (P) \Leftrightarrow \phi (I) \subset P \Leftrightarrow  P  \in V(\phi (I)) $.

\item [iii)] $\phi^*(V(b))= \{ Q \in Spec(A) | Q = \phi^{-1}(P), P \in V(b) \} = \{ Q \in Spec(A) | Q = \phi^{-1}(P), b \in P \} = \{ Q \in Spec(A) | b \in \phi(Q) \} = \{ Q \in Spec(A) | \phi^{-1}(b) \in Q \} = V(\phi^{-1}(b))$

\item [iv)] If $Q \in Spec(B)$ then $Ker(\phi)$ is contained in $\phi^*(Q)$. If $P \in V(Ker(\phi))$ then $P/  Ker(\phi)$ is isomorphic with an ideal $Q$ of $Spec(B)$ under the isomorphism $\bar{\phi}: A/  Ker(\phi) \mapsto B$ given that $\phi $ is surjective. So, $P = \phi^*(Q)$ and $\phi^*$ are surjective over $V(Ker(\phi))$. Now, if $\phi^*(P) = \phi^*(Q)$ then $\phi^{-1}(P) = \phi^{-1}(Q)$ and as $\phi$ is bijective, then $P = Q$, which shows that $\phi^* $ is injective. We had already shown the continuity of $\phi^*$ in ($i$), it only remains to show that $\phi^{-1}$ is continuous, that is to say, that $\phi^*$ is an open function, but this is obtained from ($iii$) by $\phi$ being injective. 

\item [v)] Note that $\phi$ is injective, by ($iii$) we have that $\phi^*(Spec(B)) = \phi^*(V(0))= V(\phi^{-1}(0)) = V(Ker(\phi)) = V(0) = Spec(A)$. 
\end{itemize}
\end{proof}

\section{Compactness of the prime spectrum of a MVW-rig}

In this section we will show that the prime spectrum of a MVW-rig $ A $ is compact, for this the filters will be used.

\begin{Def}[Filtro]
 Given $A$ a MVW-rig, a non-empty subset $F$ of $A$ is \textbf{filter} of $A$ if fulfills the following conditions for every $a,b \in A$:
 \begin{itemize}
 \item[\textbf{F1)}] If $a \leq b $ and $a \in F$ then $b \in F$,
 \item[\textbf{F2)}] If $a, b \in F$ then $ab \in F$
\end{itemize}  
\end{Def} 

We define the \textbf{P-filters} as the filters $F$ that fulfill an aditional property: 
\begin{itemize}
\item[\textit{\textbf{F3)}}] Given $x \in A$ and $\bigoplus_i b_i x \in F$ a finite addition with $b_i \in A$ for every $i$, then $x \in F$.
\end{itemize}

\begin{Pro}\label{pfilter}
For a MVW-rig $A$ and a set $S \subseteq A$, the P-filter generated by $S$ in $A$ is,
\[ \langle S \rangle_P = \{ x \in A \mid (\exists s_1, \dots , s_n \in S), (\exists b_1, \dots , b_m \in A): s_1 \cdots s_n \leq \bigoplus_i b_i x \} \]
 
\end{Pro}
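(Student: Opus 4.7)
The plan is to denote the right-hand side by $T$ and verify two things: $T$ is a P-filter containing $S$, and $T$ is contained in every P-filter containing $S$. The characterization then follows by minimality.

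First I would check containment $S \subseteq T$. Given $s \in S$, take $n = 1$ with $s_1 = s$ and $m = 1$ with $b_1 = 1$ (the unitary element, available under the standing convention of Section 5); then $s_1 = s \leq s = 1 \cdot s$, so $s \in T$.

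Next I would verify the three closure properties. For \textbf{F1}: if $a \in T$ via $s_1 \cdots s_n \leq \bigoplus_i b_i a$ and $a \leq b$, Proposition \ref{propiedades}(i) gives $b_i a \leq b_i b$ for each $i$, and summing preserves the inequality, hence $b \in T$. For \textbf{F2}: given $a, b \in T$ with witnesses $s_1 \cdots s_n \leq \bigoplus_i c_i a$ and $t_1 \cdots t_p \leq \bigoplus_j d_j b$, the product satisfies
$$s_1 \cdots s_n \, t_1 \cdots t_p \leq \Bigl(\bigoplus_i c_i a\Bigr)\Bigl(\bigoplus_j d_j b\Bigr) \leq \bigoplus_{i,j} (c_i a)(d_j b) = \bigoplus_{i,j} (c_i d_j)(ab),$$
where the middle inequality uses axiom (iv) of Definition \ref{DefMVWrig} extended to finite sums on both sides by induction, and the last equality uses commutativity; thus $ab \in T$. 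For \textbf{F3}: if $\bigoplus_i b_i x \in T$ with witness $s_1 \cdots s_n \leq \bigoplus_k c_k \bigl(\bigoplus_i b_i x\bigr)$, a second application of the extended distributive inequality collapses the right side to $\bigoplus_{k,i}(c_k b_i) x$, exhibiting $x \in T$.

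Finally I would show minimality: if $F$ is any P-filter with $S \subseteq F$ and $x \in T$ with $s_1 \cdots s_n \leq \bigoplus_i b_i x$, then F2 gives $s_1 \cdots s_n \in F$, F1 applied to this inequality gives $\bigoplus_i b_i x \in F$, and F3 delivers $x \in F$, so $T \subseteq F$.

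The main obstacle is the clean handling of F2, where the key move is extending the binary distributive inequality of axiom (iv) to arbitrary finite sums and then exploiting commutativity to rewrite $(c_i a)(d_j b)$ as $(c_i d_j)(ab)$, so that the resulting bound has the correct syntactic shape required by the definition of $T$. The remaining verifications are short unwindings.
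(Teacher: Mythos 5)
Your proof is correct and follows essentially the same route as the paper's: verify F1, F2, F3 for the displayed set using monotonicity of the product and the subdistributivity axiom (iv) extended to finite sums, then obtain minimality by the F2--F1--F3 chain. The only (immaterial) difference is in showing $S \subseteq \langle S \rangle_P$: you invoke the unitary element via $s \leq 1\cdot s$, while the paper uses the witness $s\cdot s \leq s\cdot s$, which avoids needing a unit at all.
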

\begin{proof}
It is easy to see that $S \subset \langle S \rangle_P$ because $s^2 \leq s^2$ implies that $s \in \langle S \rangle_P$. Let's see that $\langle S \rangle_P $ is P-filter: $i)$ Given $x \leq y, x\in \langle S \rangle_P$, then there exist $s_1, \dots , s_n \in S$ and $b_1, \dots , b_m \in A$ such that $  s_1 \cdots  s_n \leq \bigoplus_i b_i x$, and as the operations conserve the order, then $\bigoplus_i b_i x \leq \bigoplus_i b_i y$ and therefore $ s_1 \cdots  s_n \leq \bigoplus_i b_i y$, that is to say, $y \in \langle S \rangle_P$. $ii)$ Given $x,y \in \langle S \rangle_P $, then there exist $s_1, \dots , s_{n_1}, t_1, \dots , t_{n_2} \in S$ and $b_1, \dots , b_{m_1}, c_1, \dots, c_{m_2} \in A$ such that $ s_1 \cdots s_{n_1} \leq \bigoplus_i b_i x$ and $ t_1 \cdots  t_{n_2} \leq \bigoplus_j c_j y $, then $s_1 \cdots s_{n_1} \cdot t_1 \cdots  t_{n_2}\leq (\bigoplus_i b_i x)(\bigoplus_j c_j y) \leq \bigoplus_k d_k xy $ where $d_k$ are products $b_ic_j$, so $xy \in \langle S \rangle_P$. Now let's demonstrate that $\langle S \rangle_P$ has the P-filter property: If $\bigoplus_j c_j x \in \langle S \rangle_P$ for some $c_j \in A$ then there exist $s_1, \dots , s_n \in S$ and $b_1, \dots , b_m \in A$ such that $  s_1 \cdots  s_n \leq \bigoplus_i b_i (\bigoplus_j c_j x)\leq  \bigoplus_{ij} b_i c_j x = \bigoplus_k d_k x$ and therefore $x \in \langle S \rangle_P$. \\
Finally, let's see that it is the smallest P-filter containing $S$. Given $H$ P-filter such that $S \subset H$, we want to see that $\langle S \rangle_P \subset H$. Given $x \in \langle S \rangle_P$, there exist $b_1, \dots , b_m \in A$ such that $ s_1 \cdots  s_n \leq \bigoplus_i b_i x$ with  $s_1, \dots , s_n \in S \subset H$, then $s_1 \cdots  s_n \in H$ by being $H$ a filter, then $\bigoplus_i b_i x \in H$ and as $H$ has the P-filter property, we have that $x \in H$.
\end{proof}
Then $\langle S \rangle_P  $ is the smallest P-filter that contains $S$.
In particular, for an element $a$ of $A$ the P-filter generated by $a$ is:
\begin{equation}
F_a = \{  x \in A \mid  \exists n \in \mathbb{N} \text{ and } b_1, \dots , b_m \in A \text{ such that }  a^n \leq \bigoplus_i b_i x \}
\end{equation}
is a P-filter.

\begin{Pro} 
Every P-filter F of $A$ satisfies that:
\begin{center}
$F = \bigcup\limits_{a \in F} F_{a}$
\end{center}
\end{Pro}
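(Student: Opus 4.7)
The plan is to prove the equality by establishing both inclusions, each of which follows almost immediately from Proposition \ref{pfilter} and its proof; nothing deeper is needed, so the only question is how to present it cleanly.

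For the inclusion $\bigcup_{a \in F} F_a \subseteq F$, I will fix an arbitrary $a \in F$ and invoke the minimality part of Proposition \ref{pfilter}: since $F$ is a P-filter that contains the singleton $\{a\}$, the P-filter generated by $a$ satisfies $F_a = \langle \{a\} \rangle_P \subseteq F$. Taking the union over all $a \in F$ gives the inclusion.

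For the reverse inclusion $F \subseteq \bigcup_{a \in F} F_a$, I will pick $x \in F$ and show $x \in F_x$, which is enough because $x \in F$ makes $F_x$ one of the sets in the union. The membership $x \in F_x$ was essentially recorded at the start of the proof of Proposition \ref{pfilter} (the observation that $S \subseteq \langle S \rangle_P$); concretely, with $n=1$ and the single coefficient $b_1 = 1$ (available because the paper now assumes a unitary element), one has $x^1 = x \leq 1 \cdot x$, verifying the defining condition of $F_x$.

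There is no real obstacle here: the argument is just a bookkeeping exercise once the generated P-filter has been characterized. The only minor care needed is to cite the correct pieces of Proposition \ref{pfilter} (minimality for one direction, the trivial membership $x \in F_x$ for the other) rather than reproving them.
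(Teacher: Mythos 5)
Your proof is correct and takes essentially the same approach as the paper: the forward inclusion via the trivial membership $x \in F_x$, and the reverse inclusion via $F_a \subseteq F$ for each $a \in F$, which you obtain by citing the minimality clause of Proposition \ref{pfilter} while the paper simply re-derives that minimality argument inline (using \textbf{F1}, \textbf{F2} and \textbf{F3} of $F$). Your use of the unitary element to get $x \leq 1\cdot x$ is legitimate in this section, though $x^2 \leq x\cdot x$ would do the same job without it.
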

\begin{proof}
Given $x \in F$, then $x \in F_{x}\subseteq \bigcup\limits_{a \in F} F_{a}$. On the other hand, given $y \in \bigcup\limits_{a \in F} F_{a}$, then $y \in F_a$ for some $a \in F$, there exist $n \in \mathbb{N}$ and $b_1, \dots , b_m \in A$ such that $a^n \leq \bigoplus_i b_i y$, as $a \in F$, then $a^n \in F$ and therefore $\bigoplus_i b_i y \in F$, since $F$ has the P-filter property, $y \in F$.
\end{proof}

The union of two P-filters is not necessarily a P-filter. The following proposition defines the join and the meet of P-filters, which are P-filters.

\begin{Teo}\label{filtroLocal}
Given $A$ a MVW-rig we have that: \\
\begin{itemize}
	\item[i)] The P-filter generated by the union of two P-filters $F_a$ and $F_b$ is:
	\[ \langle F_a \cup F_b \rangle_P = \{ x \in A \mid \exists n_1,n_2 \in \mathbb{N} \text{ and } b_1, \dots , b_m \in A \mid a^{n_1} b^{n_2} \leq \bigoplus_i b_i x \} \] 
	\item[ii)] $ \bigvee_{a \in I} F_a = \langle \bigcup_{a \in I} F_a \rangle_P $
	\item[iii)] $F_a \cap F_b  = F_{a \vee b}$,
	\item[iv)] $F_a \cap F_b = F_a \wedge F_b $
	\item[v)] $ \langle F_a \cup F_b \rangle_P = F_a \vee F_b = F_{ab} $,
\end{itemize} 
\end{Teo}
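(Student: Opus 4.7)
The plan is to reduce the five claims to concrete inequality manipulations using the explicit description of $\langle S \rangle_P$ from Proposition \ref{pfilter} together with the distributive inequality of axiom \ref{DefMVWrig}(iv), exploiting commutativity throughout. Part (i) does the real work; (ii) and (iv) are lattice-theoretic bookkeeping, (v) combines (i) with (ii), and the only delicate point lies in the reverse inclusion of (iii).

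For (i), I would take $x \in \langle F_a \cup F_b \rangle_P$ and use Proposition \ref{pfilter} to obtain $s_1, \dots, s_n \in F_a \cup F_b$ and $b_1, \dots, b_m \in A$ with $s_1 \cdots s_n \leq \bigoplus_i b_i x$. Commutativity together with filter axiom F2 regroups the product as $s \cdot t$ with $s \in F_a$ and $t \in F_b$ (inserting the unit on either side if empty). Membership in each P-filter supplies certificates $a^{n_1} \leq \bigoplus_i c_i s$ and $b^{n_2} \leq \bigoplus_j d_j t$; multiplying these and iterating the distributive inequality $c(d \oplus e) \leq cd \oplus ce$ yields $a^{n_1} b^{n_2} \leq \bigoplus_k e_k x$. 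The reverse inclusion is immediate because $a \in F_a$ and $b \in F_b$ both lie in $F_a \cup F_b$, so every element on the right is a witness in the sense of Proposition \ref{pfilter}. Parts (ii) and (iv) are then routine: $\langle \bigcup_{a \in I} F_a \rangle_P$ contains every $F_a$ and is contained in any P-filter that does, so it is the lattice join; and a direct check shows the intersection of two P-filters still satisfies F1, F2, F3, so $F_a \cap F_b$ is the lattice meet.

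For (iii), one direction is immediate: $a, b \leq a \vee b$ combined with Proposition \ref{propiedades}(i) gives $a^n, b^n \leq (a \vee b)^n$, so any certificate for $x \in F_{a \vee b}$ already certifies $x \in F_a$ and $x \in F_b$. The converse is the main obstacle. Given certificates $a^{n_1} \leq \bigoplus c_i x$ and $b^{n_2} \leq \bigoplus d_j x$, I would set $N = n_1 + n_2 - 1$ and bound $(a \vee b)^N$ from above by combining the MV-algebra inequality $a \vee b \leq a \oplus b$ with repeated application of axiom \ref{DefMVWrig}(iv), so that $(a \vee b)^N$ is dominated by a finite sum of monomials $a^i b^{N-i}$ (commutativity collapses any order of factors). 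By the pigeonhole principle each such monomial has $i \geq n_1$ or $N - i \geq n_2$; in either case the corresponding certificate is multiplied by the leftover factors to produce a bound of the form $\bigoplus_k e_k x$, and summing over the finitely many monomials yields $(a \vee b)^N \leq \bigoplus_\ell f_\ell x$, placing $x$ in $F_{a \vee b}$.

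Finally for (v): $F_a \vee F_b = \langle F_a \cup F_b \rangle_P$ is the two-element instance of (ii), and the description from (i) together with commutativity identifies this join with $F_{ab}$. Indeed, if $a^{n_1} b^{n_2} \leq \bigoplus_i b_i x$, setting $N = \max(n_1, n_2)$ and multiplying by $a^{N-n_1} b^{N-n_2}$ converts the certificate into $(ab)^N = a^N b^N \leq \bigoplus_k e_k x$; conversely, $(ab)^n = a^n b^n$ already has the shape demanded by (i). The crux throughout is the pigeonhole step in (iii); all remaining work is iterated use of (iv) and the filter axioms.
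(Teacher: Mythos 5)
Your proposal is correct and follows essentially the same route as the paper: the explicit description of $\langle S\rangle_P$ from Proposition \ref{pfilter} for (i) and (ii), the expansion of $(a\vee b)^{N}\leq(a\oplus b)^{N}$ into monomials $a^{i}b^{N-i}$ with a pigeonhole on the exponents for (iii), and padding the exponents by extra factors of $a$ or $b$ to equalize them for (v). The only difference is cosmetic (you take $N=n_1+n_2-1$ where the paper takes $n_1+n_2$, and you spell out the regrouping in (i) that the paper leaves implicit).
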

\begin{proof}
$i)$ and $ii)$ are followed directly from the proposition (\ref{pfilter}). \\
$iii)$ Given $x \in F_a \cap F_b$ then there exist $n_1,n_2 \in \mathbb{N}$ y $b_1, \dots , b_{m_1}, c_1, \dots , c_{m_2} \in A$ such that $a^{n_1} \leq \bigoplus_i b_i x$ and $b^{n_2} \leq \bigoplus_j c_j x$, then $(a \vee b)^{n_1 + n_2} \leq \bigoplus_k d_k x \oplus \bigoplus_l e_l x$ because $(a \vee b)^{n_1 + n_2} \leq (a \oplus b)^{n_1 + n_2}$ and when expanding we get terms of the form $a^s b^r$ where $s >n_1$ or $r>n_2$, and therefore, without loss of generality, for $s > n_1$,  $a^s b^r \leq \bigoplus_i b_i x b^r \leq  \bigoplus_k d_k x$, and by expanding the expression arriving at what we wanted to prove. In this way $x \in F_{a \vee b}$. On the other hand, given $x \in F_{a \vee b}$, then there exist $n \in \mathbb{N}$ and $b_1, \dots , b_m \in A$ such that $(a \vee b)^n \leq \bigoplus_i b_i x$ and by property (v) of the proposition (\ref{propiedades}) $a^n \vee b^n \leq \bigoplus_i b_i x$, then $a^n \leq \bigoplus_i b_i x$ and $b^n \leq \bigoplus_i b_i x$ and so $x \in F_a \cap F_b$. \\
$iv)$ It follows directly from the fact that $F_a \cap F_b $ is a P-filter, like we showed it before. \\ 
$v)$ Given $x \in \langle F_a \cup F_b \rangle_P$ then there exist $n_1,n_2 \in \mathbb{N}$ and $b_1, \dots , b_m \in A$ such that $a^{n_1} b^{n_2} \leq \bigoplus_i b_i x$; if $n_1 \leq n_2$ then $a^{n_2} b^{n_2} \leq \bigoplus_i c_i x$ where $c_i = a^{n_2-n_1} b_i$, then  $(ab)^{n_2} \leq \bigoplus_i c_i x$ and therefore $x \in F_{ab}$. On the other hand, given $x \in F_{ab}$ then there exists $n \in \mathbb{N}$ and $b_1, \dots , b_m \in A$ such that $(ab)^n \leq \bigoplus_i b_i x$, this is, $a^n b^n \leq \bigoplus_i b_i x$ and so $x \in \langle F_a \cup F_b \rangle_P$. \\ 
\end{proof}

\begin{Teo}\label{L_A}
Given a MVW-rig $A$, the collection $L_A$ of P-filters of $A$, is a local.
\end{Teo}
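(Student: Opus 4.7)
The plan is to verify first that $L_A$ is a complete lattice, and then to establish the frame distributive law $F \wedge \bigvee_i G_i = \bigvee_i (F \wedge G_i)$; the latter is the crux of localeness.

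Completeness is almost free. For any family $(F_i)_{i \in I}$ of P-filters, the intersection $\bigcap_i F_i$ inherits F1, F2 and F3 by a pointwise argument, and it is non-empty because every P-filter contains $u$ (a filter is non-empty, and by F1 the presence of any element forces $u \in F$). Hence $\bigwedge_i F_i = \bigcap_i F_i$. Joins are given by $\bigvee_i F_i = \langle \bigcup_i F_i \rangle_P$, with the explicit description available from Proposition \ref{pfilter}. The top is $A$ itself, and the bottom is the intersection of all P-filters.

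For the distributive law, the inclusion $\bigvee_i (F \wedge G_i) \subseteq F \wedge \bigvee_i G_i$ is immediate from monotonicity. For the reverse, take $x \in F \cap \langle \bigcup_i G_i \rangle_P$. By Proposition \ref{pfilter} there exist indices $i_1, \ldots, i_n$, elements $g_j \in G_{i_j}$, and $b_1, \ldots, b_m \in A$ with $g_1 \cdots g_n \leq \bigoplus_k b_k x$. The key trick is to lift each $g_j$ into $F$ by setting $g'_j := g_j \vee x$; since $x \in F$ and $g_j \in G_{i_j}$, upward closure places $g'_j \in F \cap G_{i_j}$. If we can further show $g'_1 \cdots g'_n \leq \bigoplus_l d_l x$ for suitable $d_l \in A$, then Proposition \ref{pfilter} applied to the family $F \cap G_{i_1}, \ldots, F \cap G_{i_n}$ gives $x \in \bigvee_{j=1}^n (F \wedge G_{i_j}) \subseteq \bigvee_i (F \wedge G_i)$, finishing the argument.

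The main obstacle is precisely this last inequality. The idea is that $a \vee b \leq a \oplus b$ in MV-algebras together with Proposition \ref{propiedades}(i) yields $g'_1 \cdots g'_n \leq (g_1 \oplus x) \cdots (g_n \oplus x)$; iterated applications of axiom (iv) of Definition \ref{DefMVWrig} then expand the right side into a sum over the $2^n$ products $e_1 \cdots e_n$ with $e_j \in \{g_j, x\}$. The unique pure product $g_1 \cdots g_n$ is dominated by $\bigoplus_k b_k x$ by hypothesis, while every remaining term contains at least one factor $x$, and by commutativity of $A$ together with Proposition \ref{propiedades}(i) each such term is bounded by $c \cdot x$ for an appropriate $c \in A$ formed from the remaining $g_j$'s and $x$'s. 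Collecting terms delivers $g'_1 \cdots g'_n \leq \bigoplus_l d_l x$, and localeness of $L_A$ follows.
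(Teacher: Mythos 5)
Your proposal is correct and follows essentially the same route as the paper: the crux in both is lifting each generator $g_j$ into $F\cap G_{i_j}$ by upward closure (you use $g_j\vee x$, the paper uses $g_j\oplus x$), expanding the product via axiom (iv) of Definition \ref{DefMVWrig} into $2^n$ terms, and bounding every mixed term by a multiple of $x$ using commutativity while the pure term $g_1\cdots g_n$ is handled by hypothesis. The only cosmetic difference is that the paper first reduces to joins of principal P-filters $F_a$ via $F=\bigvee_{a\in F}F_a$, whereas you argue with arbitrary P-filters directly.
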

\begin{proof}
We want to see that  $ F\wedge \bigvee\limits_{a \in I} F_{a}=\bigvee\limits_{a\in I} ( F \wedge F_{a})$. \\

First, note that $F \cap \langle \bigcup\limits_{a \in I}F_{a} \rangle_P = \langle F \cap \bigcup\limits_{a \in I}F_{a} \rangle_P$, in fact, since $F \cap \bigcup\limits_{a \in I}F_{a} \subset F,\bigcup\limits_{a \in I}F_{a}$  then $ \langle F \cap \bigcup\limits_{a \in I}F_{a} \rangle_P \subset \langle F \rangle_P = F$ and $  \langle F \cap \bigcup\limits_{a \in I}F_{a} \rangle_P \subset \langle \bigcup\limits_{a \in I}F_{a} \rangle_P$, then $  \langle F \cap \bigcup\limits_{a \in I}F_{a} \rangle_P \subset F \wedge \langle\bigcup\limits_{a \in I}F_{a} \rangle_P = F \cap \langle \bigcup\limits_{a \in I}F_{a} \rangle_P $.

On the other hand, given $x \in  F \cap \langle \bigcup\limits_{a \in I}F_{a} \rangle_P$ then $x \in F $ and $x \in \langle \bigcup\limits_{a \in I}F_{a} \rangle_P$,  therefore there exist finite $ s_i\in F_{a_i}, b_1 \dots b_m \in A$ such that $ s_1 \cdots s_k \leq \bigoplus_j b_j x$. Note that $(x \oplus s_1) \cdots (x \oplus s_k) \leq x^k \oplus x^{k-1}s_1 \oplus \cdots \oplus    s_1 \cdots s_k \leq \bigoplus_l c_l x$. By axiom \textbf{F1} of filter, we have that $x \oplus s_i \in F \cap F_{a_i} $ for each $ i= 1, \dots ,k $ therefore $x \in \langle F \cap \bigcup\limits_{a \in I}F_{a} \rangle_P$

Now, if we use subsection (ii) of the proposition (\ref{filtroLocal}) we have:
$ F\wedge \bigvee\limits_{a \in I} F_{a} = F \wedge  \langle \bigcup\limits_{a \in I}F_{a} \rangle_P =  F \cap  \langle \bigcup\limits_{a \in I}F_{a} \rangle_P = \langle F \cap \bigcup\limits_{a \in I} F_a \rangle_P = \langle \bigcup\limits_{a \in I} ( F \cap F_a)  \rangle_P = \bigvee\limits_{a\in I} ( F \wedge F_{a})$.
\end{proof}

\begin{Teo}
Given $A$ a MVW-rig, then $L_A$ is compact.
\end{Teo}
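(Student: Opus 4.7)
The plan is to unwind the definition of compactness for a locale: $L_A$ is compact when every cover of the top element admits a finite subcover, i.e.\ whenever $\bigvee_{i\in I} F_i = \top$ there exists a finite $J \subseteq I$ with $\bigvee_{i \in J} F_i = \top$. So the first step is to identify the top element of $L_A$. Since every P-filter $F$ containing $0$ must, by axiom \textbf{F1}, contain all of $A$ (because $0 \leq x$ for every $x \in A$), the top of $L_A$ is $A$ itself, and a P-filter equals $A$ precisely when it contains $0$.

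Next, I would suppose $\bigvee_{i \in I} F_i = A$ for an arbitrary family $\{F_i\}_{i \in I}$ of P-filters. By the definition of the join in $L_A$ (Theorem \ref{filtroLocal}), this means $0 \in \langle \bigcup_{i \in I} F_i \rangle_P$. Applying the explicit description in Proposition \ref{pfilter}, there exist finitely many $s_1, \dots, s_n \in \bigcup_{i \in I} F_i$ and elements $b_1, \dots, b_m \in A$ such that
\[
s_1 \cdots s_n \leq \bigoplus_{j=1}^m b_j \cdot 0.
\]
By axiom (iii) of Definition \ref{DefMVWrig}, each $b_j \cdot 0 = 0$, so the right-hand side is $0$. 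Therefore $s_1 \cdots s_n = 0$.

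For each $k = 1, \dots, n$ choose an index $i_k \in I$ with $s_k \in F_{i_k}$ and set $J = \{i_1, \dots, i_n\}$, a finite subset of $I$. Each $s_k$ lies in $F_{i_k} \subseteq \bigvee_{i \in J} F_i$, and this join, being itself a P-filter, is closed under products by axiom \textbf{F2}. Hence $s_1 \cdots s_n = 0$ belongs to $\bigvee_{i \in J} F_i$, which forces $\bigvee_{i \in J} F_i = A$ by the characterization of the top. This is exactly the finite subcover property, so $L_A$ is compact.

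I do not expect a substantial obstacle: the entire argument rests on Proposition \ref{pfilter} combined with the elementary observation $b \cdot 0 = 0$. The only delicate point is conceptual rather than technical, namely recognizing that the top of the locale of P-filters is characterized by the presence of $0$ (dually to the situation with proper ideals containing the unit $u$), and that P-filter membership is a \emph{finitary} condition, which is the source of compactness.
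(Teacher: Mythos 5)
Your proof is correct and follows essentially the same route as the paper's: identify the top of $L_A$ as the P-filters containing $0$, use the finitary description of the generated P-filter from Proposition \ref{pfilter} to extract finitely many generators $s_1,\dots,s_n$ with $s_1\cdots s_n \leq \bigoplus_i b_i\cdot 0 = 0$, and conclude that the corresponding finite join already contains $0$. The only (harmless) difference is that you treat an arbitrary family of P-filters rather than just the principal ones $F_a$, which is marginally more general but changes nothing of substance.
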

\begin{proof}
Given $\bigvee\limits_{a \in I} F_{a}= A$ we want to see that there is a finite subcollection of $\left\{F_{a}\right\}_{a \in J}$ such that $\bigvee\limits_{a \in J} F_{a}= A$, $J$ finite set, $J\subset I$. \\
It results that $\bigvee\limits_{a \in I} F_{a} = \langle \bigcup\limits_{a \in I}F_{a} \rangle_P = \{ x\in A \mid \exists n_j \in \mathbb{N},$ with $ 0 \leq j \leq k$ and $b_1, \dots , b_m \in A$ such that $  a_1^{n_1} a_2^{n_2} \cdots a_k^{n_k} \leq \bigoplus_i b_i x \} $. \\
Since $0 \in A$, then $0 \geq a_1^{n_1} a_2^{n_2} \cdots a_k^{n_k}$, with $n_j \in \mathbb{N}$,  then $0 \in \langle \bigcup\limits_{j=1}^{m}F_{a_{j}} \rangle_P =  \bigvee\limits_{j=1}^{m}F_{a_{j}}$, so $\bigvee\limits_{j=1}^{m}F_{a_{j}}=A$.
\end{proof}

\begin{Obs}
	$\mathcal{O}(Spec(A))$ is the set of open subsets of $Spec(A)$
\end{Obs}

\begin{Teo}\label{local} 
For a MVW-rig $A$, the function of locals $\theta$ between $Spec(A)$ and $L_A.$
\begin{center}
$\mathcal{O}(Spec(A))\stackrel{\theta}{\rightarrow}L_{A}$ 

$V(a) \longrightarrow F_{a}$

$\bigcup\limits_{j\in J} V(a_j) \longrightarrow \bigvee\limits_{j\in J}F_{a_{j}} $
\end{center}
Is an isomorphism.
\end{Teo}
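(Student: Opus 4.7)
The plan is to verify that $\theta$ is a frame isomorphism between $\mathcal{O}(Spec(A))$ and $L_A$. Both sides are frames (the former by construction, the latter by Theorem \ref{L_A}), so it suffices to check that $\theta$ preserves finite meets and arbitrary joins, is well-defined on unions, and is bijective. Everything reduces to one key lemma, which itself rests on a preliminary order identification between basic opens and principal P-filters: $V(a) \subset V(b)$ if and only if $F_a \subset F_b$. Indeed, since $F_a$ is the smallest P-filter containing $a$, $F_a \subset F_b$ is equivalent to $a \in F_b$, i.e.\ $b^n \leq \bigoplus_i c_i a$ for some $n$ and $c_i \in A$, which in turn means $b^n \in \langle a \rangle$, that is, $b \in \sqrt{\langle a \rangle}$. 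This matches the previously proved characterization $V(a) \subset V(b) \iff \sqrt{\langle b \rangle} \subset \sqrt{\langle a \rangle}$.

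The heart of the proof is the lemma $V(a) \subset \bigcup_{k \in K} V(b_k) \iff F_a \subset \bigvee_k F_{b_k}$. For $(\Leftarrow)$, if $a \in \bigvee_k F_{b_k} = \langle \bigcup_k F_{b_k}\rangle_P$, Proposition \ref{pfilter} supplies $s_1 \cdots s_n \leq \bigoplus_i c_i a$ with each $s_l \in F_{b_{k_l}}$; substituting the defining inequalities $b_{k_l}^{m_l} \leq \bigoplus_i d_{l,i} s_l$ and repeatedly distributing via axiom (iv) yields $b_{k_1}^{m_1} \cdots b_{k_n}^{m_n} \leq \bigoplus_r h_r a$. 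Since each $b_{k_l}$ belongs to $F_{b_{k_1} \cdots b_{k_n}}$ (by Theorem \ref{filtroLocal}(v)), axioms F1 and F3 force $a \in F_{b_{k_1} \cdots b_{k_n}}$; the preliminary together with $V(b_{k_1} \cdots b_{k_n}) = \bigcup_l V(b_{k_l})$ then gives $V(a) \subset \bigcup_k V(b_k)$.

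The direction $(\Rightarrow)$ is the main obstacle, and I would handle it by contrapositive with a Zorn argument. Assuming $a \notin \bigvee_k F_{b_k}$, let $\Sigma$ be the family of P-filters containing every $b_k$ but not $a$; chain unions remain in $\Sigma$ (F1--F3 transfer coordinatewise), so Zorn gives a maximal $F^* \in \Sigma$, and the claim is that $P := A \setminus F^*$ is a prime ideal with $a \in P$ and all $b_k \notin P$. The delicate axiom is closure of $P$ under $\oplus$: if $x, y \notin F^*$ yet $x \oplus y \in F^*$, maximality of $F^*$ forces both $F^* \vee F_x$ and $F^* \vee F_y$ out of $\Sigma$, hence $a \in F^* \vee F_x$ and $a \in F^* \vee F_y$; this translates (via Proposition \ref{pfilter}) to $f x^k \leq \bigoplus_i c_i a$ and $g y^l \leq \bigoplus_j d_j a$ for some $f, g \in F^*$ and $k, l \geq 1$. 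Expanding $(x \oplus y)^{k+l}$ by axiom (iv) writes it as a join of monomials $x^i y^j$ with $i + j = k + l$; in each such monomial either $i \geq k$ or $j \geq l$, so multiplying by $fg$ and using commutativity one of the two inequalities applies to show $fg\, x^i y^j \leq \bigoplus_r h_r a$. Summing gives $fg(x \oplus y)^{k+l} \leq \bigoplus_r h_r a$; since the left side lies in $F^*$, F1 and F3 yield $a \in F^*$, contradicting $F^* \in \Sigma$. The remaining ideal and primality axioms are direct consequences of F1, F2, F3.

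With the lemma established, $\theta$ is well-defined and injective (if $\bigcup_j V(a_j) = \bigcup_k V(b_k)$, applying the lemma basic-open by basic-open shows $\bigvee_j F_{a_j} = \bigvee_k F_{b_k}$, and conversely), surjective because every P-filter $F$ equals $\bigvee_{a \in F} F_a$ (by the proposition preceding Theorem \ref{filtroLocal}), and preserves the frame operations: arbitrary joins hold by definition, while finite meets reduce via distributivity to the basic case $\theta(V(a) \cap V(b)) = F_a \wedge F_b$, which is true because $V(a) \cap V(b) = V(a \vee b)$ and $F_a \wedge F_b = F_{a \vee b}$ by Theorem \ref{filtroLocal}(iii)--(iv). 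Therefore $\theta$ is a frame isomorphism, equivalently an isomorphism of locales.
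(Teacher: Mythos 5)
Your proposal is correct, but it is organized around a different skeleton than the paper's proof. The paper verifies preservation of $\cup$ and $\cap$ on basic opens, gets surjectivity from $F=\bigvee_{a\in F}F_a$, and proves injectivity by taking $x\in F_1\setminus F_2$, checking $\langle x\rangle\cap F_2=\emptyset$, and running Zorn's lemma over the \emph{ideals} containing $x$ and disjoint from $F_2$ to extract a prime ideal $P$ lying in $\bigcup_{a\in F_1}V(a)$ but not in $\bigcup_{b\in F_2}V(b)$. You instead reduce everything to the single containment lemma $V(a)\subset\bigcup_k V(b_k)\iff F_a\subset\bigvee_k F_{b_k}$, proved via the radical characterization $V(a)\subset V(b)\iff \sqrt{\langle b\rangle}\subset\sqrt{\langle a\rangle}$ on one side and, for the hard direction, a Zorn argument over \emph{P-filters} avoiding $a$, whose maximal element has a prime ideal as complement; the delicate verification that the complement is closed under $\oplus$ (via the expansion of $(x\oplus y)^{k+l}$ and the dichotomy $i\geq k$ or $j\geq l$) is the exact dual of the paper's verification that the maximal disjoint ideal is prime (via the product $ww'$). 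The two separation arguments are equivalent in substance, so neither buys extra generality; what your formulation does buy is that the containment lemma simultaneously delivers order-preservation, injectivity, \emph{and} well-definedness of $\theta$ on arbitrary unions (two presentations $\bigcup_j V(a_j)=\bigcup_k V(b_k)$ of the same open necessarily yield $\bigvee_j F_{a_j}=\bigvee_k F_{b_k}$), a point the paper's proof leaves implicit. Two cosmetic remarks: in the expansion of $(x\oplus y)^{k+l}$ you should say a $\oplus$-sum of monomials rather than a join, and in the $(\Leftarrow)$ direction the cleanest route to $a\in F_{b_{k_1}\cdots b_{k_n}}$ is exactly the one you sketch (each $b_{k_l}$ lies in that principal P-filter, so F2, F1, F3 apply in turn); both are minor wording issues, not gaps.
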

\begin{proof}
First, let's see that $\theta$ is an isomorphism: 
\begin{itemize}
\item $\theta(V(a) \cup V(b)) = \theta(V(ab)) = F_{ab} = F_a \vee F_b = \theta(V(a)) \vee \theta(V(b))$. 
\item $\theta(V(a) \cap V(b)) = \theta(V(a \vee b)) = F_{a \vee b} = F_a \wedge F_b = \theta(V(a)) \wedge \theta(V(b))$.
\end{itemize}
$\theta$ is surjective: given that $F \in L_A$, we have that $F= \langle \bigcup\limits_{a\in F} F_{a} \rangle_P = \bigvee\limits_{a\in F} F_{a}$ then $\theta\textit{}(\bigcup\limits_{a\in F}  V(a))=F$ by definition of $\theta$. \\
Now, let's see that $\theta$ is injective: given  $F_{1}, F_{2}\in L_A$, such that $F_{1} \neq F_{2}$ being $x \notin F_{2}$ and $x \in F_{1}$ then the ideal of the MVW-rig $A$ generated by $x$ satisfies that $\langle x \rangle \cap F_2 = \emptyset$, in fact, $\langle x \rangle \cap F_{2} \neq \emptyset$, implies that there exists  $z \in (x) \cap F_{2}$, and $z \leq \bigoplus_j b_i x$, since $F_{2}$ is P-filter,  $x \in  F_{2}$, which is absurd.

Let's consider the set of ideals,

\begin{center}
$\Sigma =\{I \mid x \in I; I\cap F_{2} = \emptyset \}$

$(x) \in \Sigma$  then  $ \Sigma \neq \emptyset$
\end{center}

The set  $\Sigma$ is inductively superior: each chain of ideals $ I_{i}\in \Sigma$ has a upper bound $\bigcup I_{i}\in \Sigma$. Then by the Zorn's lemma, $\Sigma$ contains at least a maximal element. Being $P$ a maximal of $\Sigma$,then  $x \in P$ and $P\cap F_2=\emptyset$. We want to see that P is prime ideal: being $y, z \in A$, such that $yz \in P$; we want to see that  $y \in P$ or $z \in P$. Let's suppose that $y \notin P$ and $z \notin P$. By the maximality of $P$ in $\Sigma$, we follow that: 

\begin{center}
$ \langle P \cup \left\{y\right\} \rangle \cap F_{2} \neq \emptyset$\qquad{}  and \qquad{} $ \langle P \cup \left\{z\right\} \rangle \cap F_{2} \neq \emptyset$ 
\end{center}

Consequently there are $p,q \in P$, $w ,w' \in F_{2}$ and $a_1, \dots, a_{m_1}, b_1, \dots , b_{m_2} \in A $ such that

\begin{center}
$w \leq p \oplus \bigoplus_i a_i y$\qquad{}  and \qquad{} $w' \leq q \oplus \bigoplus_j b_j z $
\end{center}

Since  $F_{2}$ is filter, then 

\begin{center}
$(p \oplus \bigoplus_i a_i y)\in F_{2}$\qquad{}  and \qquad{} $(q \oplus \bigoplus_j b_j z)\in F_{2}$
\end{center}

So, as the product retains order, we have: 

\begin{center}
$ w w' = (p \oplus \bigoplus_i a_i y)(q \oplus \bigoplus_j b_j z) \leq r \oplus \bigoplus c_i yz = r'$
\end{center}

where the axiom (iii) was used and $r$ is an element of $P$ obtained from the sums and products of the elements $p,q \in P$ with other elemets of $A$ that, by absorbent property of $P$ are in $P$. Since $w w' \in F_2$ and  $F_2$ is filter $r' \in F_{2}$. We follow that $r' \in P\cap F_{2}$ which contradicts the hypothesis  $P\cap F_{2}=\emptyset $; then $y \in P$ or $z \in P$. In consequence

\begin{center}
$\bigcup\limits_{a\in F_{1}} V(a) \neq  \bigcup\limits_{b\in F_{2}} V(b) $.  
\end{center}

because $P \in \bigcup\limits_{a\in F_{1}} V(a) $, due to $ x \in F_{1}$, $P \in V(x)$, $P \notin \bigcup\limits_{b \in F_{2}}V(b)$  due to  $P\cap F_{2}=\emptyset$
\end{proof}

\begin{Teo} 
Given $A$ a MVW-rig, the $Spec(A)$ with the co-Zariski topology, is a compact topological space.
\end{Teo}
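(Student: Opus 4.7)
The plan is to deduce the topological compactness of $Spec(A)$ from the two preceding theorems: the compactness of the locale $L_A$ of P-filters, and the locale isomorphism $\theta : \mathcal{O}(Spec(A)) \to L_A$ established in Theorem \ref{local}. Since compactness of a frame/locale is preserved under isomorphism, the result should follow formally.

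First, I would take an arbitrary open cover of $Spec(A)$. Because the sets $V(a)$ with $a \in A$ form a basis, it suffices to consider a cover of the form $\{V(a_j)\}_{j \in I}$, so that $\bigcup_{j \in I} V(a_j) = Spec(A) = V(0)$ in $\mathcal{O}(Spec(A))$. Next, I apply the locale isomorphism $\theta$ to both sides. Since $\theta$ sends $V(a)$ to $F_a$ and sends unions (joins in $\mathcal{O}(Spec(A))$) to joins in $L_A$, we obtain $\bigvee_{j \in I} F_{a_j} = \theta(V(0)) = F_0$. Noting that $0^n = 0 \leq \bigoplus_i b_i x$ holds trivially for every $x \in A$, we have $F_0 = A$, the top element of $L_A$.

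Now I invoke the previous theorem asserting that $L_A$ is compact: from $\bigvee_{j \in I} F_{a_j} = A$ there exists a finite subset $J \subset I$ with $\bigvee_{j \in J} F_{a_j} = A$. Transporting this equality back through $\theta^{-1}$ (equivalently, using the injectivity of $\theta$ together with the fact that $\theta\bigl(\bigcup_{j \in J} V(a_j)\bigr) = \bigvee_{j \in J} F_{a_j} = A = \theta(V(0))$) we conclude that $\bigcup_{j \in J} V(a_j) = Spec(A)$, producing a finite subcover.

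There is essentially no obstacle to overcome here: all the work has already been done in the theorems on $L_A$. The only mild subtlety is the bookkeeping step of identifying $F_0$ with the top of $L_A$, and of reducing an arbitrary open cover to a cover by basic open sets $V(a)$ before applying $\theta$; once this is observed, the argument is a direct transfer of compactness along the locale isomorphism.
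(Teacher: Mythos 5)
Your argument is correct and is exactly the intended one: the paper's proof simply says the result follows from the preceding statements, and you have spelled out the standard transfer of compactness from $L_A$ to $Spec(A)$ along the locale isomorphism $\theta$, including the reduction to basic opens and the identification of $\theta(V(0))=F_0=A$. No issues.
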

\begin{proof}
The preceding statements are followed.
\end{proof}
\section{Conclusions}
The MV-algebras were founded by Chang \cite{Chang2} to demostrate a completeness theorem for fuzzy logic. This rich structure has been studied since many years ago. One of the most important difficulties for someone to make commutative fuzzy algebra is the absense of some product operation. The MV-algebra $[0,1]$ has a natural product and this product respects the MV-algebra structure. There are some results about the MV-algebra's product \cite{DiNola} and \cite{DiNola2}. Our focus is related with optaning an adequate theory in order to represent a fuzzy commutative algebra in the best form.

There exists a close relationship between the class of the special MVW-rigs and some kind of $l_u-$rings. These categories are equivalent, however we don't show this result here.
%
%

\end{document}